\numberwithin{figure}{section}
\numberwithin{equation}{section}
\numberwithin{table}{section}
\newtheorem{theorem}{Theorem}[section]
\newtheorem{corollary}{Corollary}[section]
\newtheorem{lemma}{Lemma}[section]
\theoremstyle{definition}
\newtheorem{definition}{Definition}[section]
\newtheorem{remark}{Remark}[section]
\newtheorem{assumption}{Assumption}[section]
\title[Synchronization in Networks with Strongly Delayed Couplings]{Synchronization in Networks with Strongly Delayed Couplings}
\author[Daniel M. N. Maia et al.]{}
\subjclass{34D06, 34K08, 34K13.}
 \keywords{Complex Networks, Strong Delay, Synchronization, Self-feedback delay}
 \email{maia@physik.hu-berlin.de}
  \email{elbert.macau@inpe.br}
 \email{tiago@icmc.usp.br}
 \email{yanchuk@math.tu-berlin.de}
\begin{document}
\maketitle

\centerline{\scshape Daniel M. N. Maia\footnote{Current address: Department of Mathematics and Statistics, State University of Rio Grande do Norte -- UERN, Mossor\'o-RN, 59610-210 Brazil.}, Elbert E. N. Macau}
\medskip
{\footnotesize
 \centerline{Associate Laboratory of Applied Computing and  Mathematics - LAC}
 \centerline{National Institute for Space Research - INPE}
   \centerline{Av. dos Astronautas, 1758, S\~ao Jos\'e dos Campos-SP, 12227-010, Brazil}
} 

\medskip

\centerline{\scshape Tiago Pereira}
\medskip
{\footnotesize
 \centerline{Institute of Mathematics and Computer Sciences - ICMC}
   \centerline{University of S\~ao Paulo - USP}
   \centerline{Av. do Trabalhador S\~ao-Carlense 400, S\~ao Carlos-SP, 13566-590, Brazil}
}

\medskip

\centerline{\scshape Serhiy Yanchuk}
\medskip
{\footnotesize
 \centerline{Institute of Mathematics, Technical University of Berlin}
   \centerline{Strasse des 17. Juni 136, 10623 Berlin, Germany}
}

\bigskip

 \centerline{(Communicated by the associate editor name)}

\begin{abstract}
We investigate the stability of synchronization in networks of dynamical
systems with strongly delayed connections. We obtain strict conditions
for synchronization of periodic and equilibrium solutions. In particular,
we show the existence of a critical coupling strength $\kappa_{c}$,
depending only on the network structure, isolated dynamics and coupling
function, such that for large delay and coupling strength $\kappa<\kappa_{c}$,
the network possesses stable synchronization. The critical coupling
$\kappa_{c}$ can be chosen independently of the delay for the case
of equilibria, while for the periodic solution, $\kappa_{c}$ depends
essentially on the delay and vanishes as the delay increases. We observe
that, for random networks, the synchronization interval is maximal
when the network is close to the connectivity threshold. We also derive
scaling of the coupling parameter that allows for a synchronization
of large networks for different network topologies. 
\end{abstract}

\section{Introduction}

Coupled systems and networks with time-delayed interactions emerge
in different fields including laser dynamics \cite{Javaloyes2003,Schlesner2003,Fiedler2008,Erneux2009,Soriano2013,YanchukGiacomelli2017},
neural networks \cite{Wu1998,Foss2000,Izhikevich2006,Campbell2006},
traffic systems \cite{Orosz2010}, and others. Time delays in these
systems are caused by finite signal propagation or finite reaction
times. In many cases, especially in optoelectronics, delays are larger
than the other time scales of the system, and they play a major role
for system's dynamics \cite{Soriano2013,YanchukGiacomelli2017}. For neural systems, large delays may emerge
as a lump effect of a signal propagation along feed-forward chains
of neurons \cite{Luecken2013b}.

When the interaction among oscillators is diffusive,
the network possesses a synchronous
subspace, where all subsystems constituting the network behave identically.
The stability of the synchronous subspace plays a major role in applications. 
For instance, neural synchronization is known
to be involved in brain functioning \cite{Singer1993}, synchronization
of lasers is used for communication purposes \cite{Colet1994,Argyris2005}.

Therefore, the stability of synchronization for strongly self-feedback delayed interactions 
has been addressed in various contexts and generalities 
in a mixture of analytical and numerical techniques \cite{Kinzel,Yanchuk2010_2}. 

Here, we aim at studying how  stability of the synchronization manifold depends on the dynamics of the isolated nodes and network structure. We focus on the case where the isolated dynamics is either periodic or an equilibrium solution. This case is amenable to detailed understanding in the limit of large delays.

{\bf Model and discussion of results:}
We consider a network model  with a self-feedback term in the coupling. This kind of coupling is widely studied, for example, in semiconductors lasers models \cite{Soriano2013,Hart2015}, where the self-feedback interaction is generated by the optical feedback.
We consider the following system of $n$ identical oscillators with
time-delayed interactions \cite{Hart2015,Yanchuk2010_2,Kinzel,Steur2014,Dahms2012,Campbell2006}
\begin{equation}
\dot{x}_{j}(t)=f(x_{j}(t))+\kappa\sum_{l=1}^{n}A_{jl}h(x_{l}(t-\tau)-x_{j}(t-\tau)),\label{eq:net_mod_lap}
\end{equation}
where $j=1,\cdots,n$, $\kappa$ is the coupling strength, $A$
is the adjacency matrix with the elements $A_{jl}=1$ if $j\ne l$
and there is a link from $l$ to $j$ and $A_{jl}=0$ otherwise. The function 
$f:\mathbb{R}^{q}\to\mathbb{R}^q$ describes
the uncoupled dynamics and the coupling function $h:\mathbb{R}^{q}\to\mathbb{R}^{q}$, $h(0)=0$,
describes interactions between the nodes.
More details and
assumptions on the model \eqref{eq:net_mod_lap} is given in Sec. \ref{sec:Dynamical-time-delay-network}.

We consider the  time-delay $\tau>0$ as a large parameter.
Physically, this means that the interaction time is much larger than
the typical time scale of an uncoupled system, which often occurs
in, e.g. coupled laser systems \cite{Soriano2013,YanchukGiacomelli2017}.

The uncoupled units of the network have an attractor, 
which is either an equilibrium or periodic orbit. And,
we obtain conditions for synchronization and desynchronization which depends on the network
dynamics.

\begin{itemize}
\item[(\textit{i})] For the equilibrium, we show that with strong enough delay, there
is a critical coupling parameter $\kappa_c$, which depends on the dynamics, coupling function and network structure, such that the equilibrium solution in the network remains stable
if and only if $|\kappa|<\kappa_c$ (see Theorem \ref{theo:fp}). For this case, $\kappa_c$
is independent of the delay.

\item[(\textit{ii})]  Synchronization of stable periodic
orbits  is in sharp contrast to fixed point case. It is shown to be always desynchronized for long enough
time-delay. However, for long but finite delay the synchronization
of periodic orbits are attained for an interval that is shrinking
as the delay grows to infinity, that is, for this case the critical coupling parameter depends
also on the delay, $\kappa_c = \kappa(\tau)$,
and $\kappa(\tau)\to 0$ as $\tau\to\infty$. 
See Theorem \ref{cor_sync_periodc_orbits} for the
precise statement.
\end{itemize}

Fixing the vector field $f$, the coupling function $h$ and the delay $\tau$, we can study how the network structure affects the critical coupling $\kappa_c$. Our results elucidate this dependence.
For instance, we will show that
\begin{itemize}
\item[--] For Barab\'asi-Albert scale free network, $\kappa_c \sim \mathcal{O}(1/\sqrt{n})$.
\item[--] For Erd\"os-R\'eniy (ER) random networks, $\kappa_c \sim \mathcal{O}(1/\log{n})$.
\end{itemize}

This shows that having a large number of connections is detrimental, and
close to percolation threshold is optimal for synchronization. 
This is in contrast to the non-delayed case \cite{Maia2016, Pereira2014} --
best synchronization scenario happens for homogeneous networks and the large the degrees the better.

Here we use the theory of functional differential equations \cite{Hale1993,Smith},
elements of the spectral theory of graphs \cite{Fiedler1973,Mohar2004,Brouwer2012},
as well as a spectral theory for delay differential equations with
strong delays \cite{Yanchuk2010,Lichtner2011,Sieber2012,YanchukGiacomelli2017}.
Some important ideas from Ref.~\cite{Yanchuk2010_2} on the synchronization
of strongly delayed networks are used as well.

The manuscript is organized as follows: In Sec.~\ref{sec:Dynamical-time-delay-network},
we give some assumption on the considered model. In Sec. \ref{sec_main_results} we
state the main results, in particular, a special case for the result of synchronization of periodic orbits. In Section \ref{sec_spectrum_description} we give a description of
 the spectrum for
the variational equation and prove the main results with illustrations given in Sec. \ref{sec_example}. In Sec. \ref{sec_sync_loss} we discuss the relevance of the structure of
the network to the synchronization condition and
 in Sec. \ref{sec_scaling} we discuss how scaling properties
of the coupling parameter allow synchronization.

\section{Assumptions on the network model}

\label{sec:Dynamical-time-delay-network}

The initial conditions of
\eqref{eq:net_mod_lap} are history functions $\phi_{j}:[-\tau,0]\to\mathbb{R}^{q}$
and we assume that $\phi_{j}$ is continuous for all $j=1,\cdots,n$.
The state space where the solutions $x_j$ and initial conditions $\phi_j$ lives is the
space of continuous functions $C([-\tau,0],\mathbb{R}^q)$ endowed with  
 the maximum norm in $C([-\tau,0],\mathbb{R}^q)$ defined by $\Vert \phi\Vert = \sup_t |\phi(t)|$ where $\vert \cdot \vert$ is some norm in $\mathbb{R}^q$.

The function $f:\mathbb{R}^{q}\rightarrow\mathbb{R}^{q}$ is of class
$C^{r}$, $r\geq2$. Assume that the solutions of the system are uniformly bounded, i.e., the overall system is dissipative. 
So the solution operator is eventually compact and thus we can apply the spectral mapping theorem. 
Thus, we can obtain detailed information by studying the linearized system.

The following assumptions for the vector field $f$  restrict the model \eqref{eq:net_mod_lap} to the synchronization
scenarios that we are interested in.

\begin{assumption} 
\label{assump_f} 
The uncoupled system 
possesses an exponentially locally stable equilibrium $s(t)=s_{0}$.
\end{assumption}

\begin{assumption} 
\label{assump_f-1} 
The uncoupled system 
possesses a locally exponentially  stable periodic solution
$s(t)$. 
\end{assumption}

The coupling function $h:\mathbb{R}^{q}\to\mathbb{R}^{q}$,
is assumed to be bounded and differentiable,
$H=D h(0)$ is the Jacobian matrix at $0$ and $\det(H)\neq0$.

Due to the diffusive nature of the coupling, the coupling term vanishes
identically if the states of all oscillators are identical. This ensures
that the synchronized state $x_{j}(t)=s(t)$ for all $j=1,2,\dots,n$
persists for all coupling strengths $\kappa$.
Let $s(t)$ be the solution specified in Assumptions \ref{assump_f}
or \ref{assump_f-1} and $U\subset\mathbb{R}^{q}$ a tubular neighborhood
which belongs to the basin of attraction of $s(t)$. The set 
\[
\mathcal{S}:=\left\{ x_{j}\in U\subset\mathbb{R}^{q}:\quad j=1,\cdots,n\mbox{ and }x_{1}=\cdots=x_{n}\right\} 
\]
is called the \textit{synchronization manifold}. 
The local stability
of $\mathcal{S}$ determines the stability of the synchronization,
and it depends, in particular, on spectral properties of Laplacian
matrix $L$ defined as $L=D-A$, where $A$ is the adjacency matrix and
$D$ is the diagonal matrix of the network's degrees. 
We assume that the matrix $L$ is diagonalizable.

\section{Main Results}

\label{sec_main_results}

If the uncoupled system has stable equilibrium,
then the coupled system possesses the stable equilibrium
$(s_{0},\dots,s_{0})$ at $\kappa=0$, and the equilibrium solution remains stable for small 
$\kappa$ and small delay by the principle of linearization.

Our result concerns the limit of large delay. In this setting, the principle of linearization does not apply, and the spectral theory of DDE offers the tools to tackle this problem.

Theorem \ref{theo:fp}, gives the interval of the coupling
parameter $\kappa$ where the equilibrium solution of \eqref{eq:net_mod_lap}
is locally exponentially stable for large delays. More specifically, it provides a
critical coupling parameter $\kappa_{c}$ such that the destabilization
occurs for $\kappa>\kappa_{c}$. Moreover,  $\kappa_{c}$
does not depend on the delay.

\begin{theorem}[Persistence of equilibrium]
\label{theo:fp} 
Consider system \eqref{eq:net_mod_lap} and Assumption
\ref{assump_f}. Then there is $\tau_{0}>0$ and
a constant $r_{0}=r_{0}(f,h)>0$ such that for $\kappa\neq0$ and
\begin{equation}
|\kappa|<\kappa_{c}\coloneqq\frac{r_{0}}{\rho_{L}}\label{eq_alpha_condition}
\end{equation}
and $\tau>\tau_{0}$, the equilibrium solution remains locally
exponentially stable. Here, $\rho_{L}$ is the spectral radius of
the Laplacian matrix $L$.

If the condition $\kappa>\kappa_{c}$ is fulfilled, then there exists
such $\tau_{0}>0$ that the equilibrium solution is unstable
for $\tau>\tau_{0}$. 
\end{theorem}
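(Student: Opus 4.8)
The plan is to analyze the variational equation around the synchronized equilibrium $(s_0,\dots,s_0)$ and to decouple it using the eigenmodes of the Laplacian $L$. Linearizing \eqref{eq:net_mod_lap} about $s(t)=s_0$ and setting $F=Df(s_0)$, $H=Dh(0)$, one obtains a system whose coupling is governed by $L$ acting on the perturbation vector. Since $L$ is assumed diagonalizable, I would write $L=P\Lambda P^{-1}$ and change coordinates so that the variational system splits into $n$ blocks indexed by the eigenvalues $\mu_1,\dots,\mu_n$ of $L$. The mode corresponding to $\mu=0$ (the eigenvector $(1,\dots,1)^{\top}$) governs dynamics tangent to $\mathcal{S}$ and is controlled by Assumption~\ref{assump_f}; the transverse modes determine stability of synchronization. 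Each transverse block then reads
\begin{equation}
\dot{y}(t)=F\,y(t)-\kappa\mu\,H\,y(t-\tau),
\end{equation}
a scalar-parameter-family of linear DDEs with strong delay, and stability of $\mathcal{S}$ is equivalent to all such blocks (for $\mu\ne 0$) being exponentially stable.

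Next I would invoke the spectral theory for DDEs with large delay \cite{Yanchuk2010,Lichtner2011,Sieber2012,YanchukGiacomelli2017} to describe the spectrum of each transverse block in the limit $\tau\to\infty$. The key structural fact is that the spectrum splits into two parts: a \emph{strongly unstable} set of finitely many eigenvalues that converge to the eigenvalues of $F$ (which lie in the open left half-plane by exponential stability of $s_0$), and a \emph{pseudo-continuous spectrum} that accumulates, after the rescaling $\lambda=\gamma/\tau+i\omega$, onto curves determined by the characteristic equation. I would write the characteristic equation $\det\bigl(\lambda I-F+\kappa\mu\,e^{-\lambda\tau}H\bigr)=0$ and, following the strong-delay ansatz, derive the asymptotic location of the pseudo-continuous branches. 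Because $F$ is already stable, the strongly unstable eigenvalues pose no threat for large $\tau$; the critical part is the real part of the pseudo-continuous spectrum, which to leading order is governed by $\log|\kappa\mu|$ relative to the eigenvalues of $H$ and $F$. This is where the threshold $r_0=r_0(f,h)$ emerges: there is a constant, depending only on the isolated dynamics $F$ and the coupling Jacobian $H$, such that the pseudo-continuous spectrum stays in the left half-plane precisely when $|\kappa\mu|<r_0$ for every transverse eigenvalue $\mu$.

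The condition then becomes a requirement on all nonzero $\mu$ in the spectrum of $L$, and the worst case is the eigenvalue of largest modulus, i.e. the spectral radius $\rho_L$. Demanding $|\kappa|\,|\mu|<r_0$ for every such $\mu$ is equivalent to $|\kappa|\,\rho_L<r_0$, which yields $|\kappa|<\kappa_c=r_0/\rho_L$ exactly as stated, with $r_0$ independent of $\tau$ because the leading-order location of the pseudo-continuous spectrum depends on $\kappa\mu$ only through $\log|\kappa\mu|$ and the fixed spectral data of $F,H$. For the instability half, I would exhibit for $|\kappa|>\kappa_c$ the eigenvalue $\mu$ achieving the spectral radius and show that the corresponding pseudo-continuous branch crosses into the right half-plane for all sufficiently large $\tau$, producing an unstable transverse mode and hence loss of stability of $\mathcal{S}$; the dissipativity assumption and eventual compactness of the solution operator justify reading off stability from the point spectrum via the spectral mapping theorem.

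The main obstacle I anticipate is making the asymptotic description of the pseudo-continuous spectrum rigorous and extracting from it the \emph{uniform}, $\tau$-independent constant $r_0$. One must control the transition between the strongly unstable eigenvalues and the pseudo-continuous part uniformly in the eigenvalue parameter $\mu$, and show that the leading-order criterion in terms of $\log|\kappa\mu|$ captures the true sign of the real part for all large but finite $\tau$, with error terms that vanish as $\tau\to\infty$ and do not spoil the strict inequality at the threshold. Establishing that $r_0$ depends only on $f$ and $h$ (through the spectra of $F$ and $H$) and not on the delay, and that the boundary case $|\kappa|=\kappa_c$ separates stability from instability cleanly, is the technical heart of the argument.
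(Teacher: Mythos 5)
Your proposal follows essentially the same route as the paper: diagonalize the Laplacian to reduce the variational equation to blocks $\dot{\xi}=J\xi(t)-\kappa\mu H\xi(t-\tau)$, observe that the strongly unstable spectrum is empty because $J=Df(s_0)$ is stable, locate the pseudo-continuous spectrum through its dependence on $\ln|\kappa\mu|$, and take the worst transverse mode $|\mu|=\rho_L$ to obtain $|\kappa|<r_0/\rho_L$, with instability for $|\kappa|>\kappa_c$ from a branch crossing into the right half-plane. The obstacle you flag as the technical heart is resolved in the paper by invoking the known explicit formula for the asymptotic continuous spectrum, $\gamma_l(\omega,\sigma)=-\ln|g_l(\omega)|+\ln|\sigma|$ with $g_l$ the roots of $\det\left[-i\omega\mathbb{I}+J+gH\right]=0$ (Lemma \ref{lemma_pseudo_spectrum}), together with a short contradiction argument showing $r_0=\min_l\inf_\omega|g_l(\omega)|>0$ (else $J$ would have a purely imaginary eigenvalue), and by appealing to the large-delay stability result adapted from \cite{Sieber2012} to pass from the limiting spectra to exponential stability for all $\tau>\tau_0$.
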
 
As follows from Theorem \ref{theo:fp},
the condition $\kappa=\kappa_{c}$ is the strict destabilization value
for $\tau\to\infty$, while the value $\kappa_{c}$ does not depend
on time delay $\tau$. The proof of Theorem \ref{theo:fp} is presented
in Sec.~\ref{sec_spectrum_description}. The interval $(0,r_{0}/\rho_{L})$
of the coupling strength will be called the \emph{synchronization
window}. 

We also show,
in Sec. \ref{sec_transient}, that the characteristic time $t_{\text{tr}}$
in which the trajectories of \eqref{eq:net_mod_lap} approach the
synchronous (equilibrium) solution scales as 
\begin{equation}
t_{\text{tr}}(\kappa) \sim -\tau\ln^{-1}\frac{\kappa}{\kappa_{c}}\label{eq:cor_scaling}
\end{equation}
for $\kappa\nearrow\kappa_{c}$ and large delay. In Sec. \ref{sec_transient}
we derive the estimate (\ref{eq:cor_scaling}) and illustrate it with
an example.

In the case of the synchronous periodic solutions,
the situation is subtle. In particular, one can show that the 
periodic orbits will be 
desynchronized with an increasing delay, however, it is possible to
synchronize them for any finite time-delay. 

The generic condition for synchronization of periodic orbits is a co-dimension one condition. This condition is technical and it will be discussed at length later in Sec. \ref{sec_periodic_orbits}.
To avoid overly technical statements we first give our result for the special case when the isolated dynamics is given by a Stuart-Landau (SL) system. The network model reads
\begin{equation}
\dot{z}_{j}=(\alpha+i\beta)z_{j}-z_{j}|z_{j}|^{2}+\kappa\sum_{l=1}^{n}{A_{jl}}h(z_{l}(t-\tau)-z_{j}(t-\tau)),\label{eq_SL_adj}
\end{equation}
where $z_{j}\in\mathbb{C}$. 

The uncoupled SL system is retrieved if $\kappa=0$. Note that 
it has one equilibrium at the origin, which is asymptotically stable
for $\alpha<0$, and a stable periodic orbit for $\alpha>0$, which
emerges from the Hopf bifurcation at $\alpha=0$.
This model is illustrative since it represents a normal form for a Hopf bifurcation.
\begin{theorem}[Synchronization of periodic SL system]
\label{thm_SL}
Consider the system \eqref{eq_SL_adj} with $\alpha>0$, $\beta>0$ and $h'(0)=1$.
Let the condition
$$\tau \neq \frac{\pi + 2M\pi}{2\beta}, \quad M\in\mathbb{N},$$ be fulfilled. Then,
there is $\tau_0>0$ such that for each $\tau > \tau_0$ there is $\kappa_c = \kappa_c(\tau)$
such that the synchronous
periodic solution is locally exponentially stable
\begin{itemize}
\item[(I)] for $0<\kappa<\kappa_c(\tau)$ if $\cos(\beta\tau)>0$ or
\item[(I)] for $-\kappa_c(\tau)<\kappa<0$ if $\cos(\beta\tau)<0$.
\end{itemize}
Moreover, $\kappa_c(\tau)\to 0$ as $\tau\to\infty$.
\end{theorem}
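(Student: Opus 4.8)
The plan is to analyze the variational equation around the synchronous periodic solution, exploiting the block-diagonalization afforded by the diagonalizability of the Laplacian $L$. Since $s(t)$ is the stable periodic orbit of the uncoupled Stuart--Landau system, linearizing \eqref{eq_SL_adj} about the synchronized state $z_j(t)=s(t)$ and projecting onto the eigenvectors of $L$ decouples the system into $n$ scalar (complex) variational equations of the form $\dot{\xi} = Df(s(t))\xi - \kappa\,\mu_k\,Dh(0)\,\xi(t-\tau)$, where $\mu_k$ ranges over the eigenvalues of $L$. The synchronization manifold is stable precisely when all transverse modes (those with $\mu_k\neq 0$) decay, so I would reduce the problem to establishing exponential stability of each such mode. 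For the SL normal form, $Df(s(t))$ along the orbit is explicitly computable, and with $h'(0)=1$ the coupling enters simply as $-\kappa\mu_k\xi(t-\tau)$.

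First I would invoke the strong-delay spectral theory cited in the excerpt (Yanchuk--Lichtner--Sieber) to describe the Floquet-type spectrum of each transverse mode in the large-$\tau$ limit. The key structural fact is that for strongly delayed systems the spectrum splits into a \emph{strong} (or instantaneous) part, governed by the delay-free dynamics, and a \emph{pseudo-continuous} spectrum that organizes along curves scaling like $1/\tau$. The strong spectrum here is inherited from the stable orbit $s(t)$ and poses no threat; the destabilization must come from the pseudo-continuous branch. I would write the characteristic condition for the pseudo-continuous spectrum, parametrize it by the rescaled frequency, and extract the leading real part as a function of $\kappa$, $\mu_k$, $\beta$, and $\tau$. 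The role of the hypothesis $\tau\neq(\pi+2M\pi)/(2\beta)$ is to guarantee $\cos(\beta\tau)\neq 0$, so that the sign of $\cos(\beta\tau)$ determines on which side of zero the coupling $\kappa$ must lie to push the real part negative; this is exactly the dichotomy (I)/(I') in the statement.

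The main technical step is to show that for each fixed transverse eigenvalue $\mu_k$ there is a threshold $|\kappa|<\kappa_c(\tau)$ below which the entire pseudo-continuous spectrum of mode $k$ has strictly negative real part, and that taking the minimum over the finitely many nonzero $\mu_k$ yields a uniform $\kappa_c(\tau)>0$. Concretely, the leading real part of the pseudo-continuous spectrum is expected to behave like $\tfrac{1}{\tau}\log(|\kappa\mu_k|\cdot C)$ for an $\mathcal{O}(1)$ constant $C$ determined by the Floquet multiplier of $s(t)$, so negativity forces $|\kappa|$ below an $\mathcal{O}(1/\tau)$-dependent bound that shrinks to zero as $\tau\to\infty$; this simultaneously gives the threshold and the final claim $\kappa_c(\tau)\to 0$. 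The relevant sign condition on $\cos(\beta\tau)$ enters through the imaginary part $\beta$ of the SL linearization, which rotates the coupling term and selects which sign of $\kappa$ aligns the feedback to be contracting.

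The hard part will be obtaining \emph{uniform} control of the pseudo-continuous spectrum over all frequencies rather than just its leading point, and rigorously justifying the $1/\tau$ asymptotic expansion of the Floquet exponents for the periodically time-dependent coefficient $Df(s(t))$ --- the spectral theory for strong delay is cleanest for constant-coefficient (equilibrium) problems, and the periodic case requires handling the monodromy operator and its interaction with the delay. I would expect to linearize the characteristic function, apply the implicit function theorem near the critical configuration to track the dominant branch, and carefully verify that no resonance between the intrinsic frequency $\beta$ and the $1/\tau$ spectral spacing produces an unexpected instability, which is precisely why the resonant delays $\tau=(\pi+2M\pi)/(2\beta)$ must be excluded.
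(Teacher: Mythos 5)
Your setup (block-diagonalization into transverse modes, invoking the strong-delay spectral splitting, and tying the sign dichotomy to $\cos(\beta\tau)$) matches the paper's, but your core quantitative step fails. You claim the leading real part of the pseudo-continuous spectrum behaves like $\tau^{-1}\log\left(|\kappa\mu_k|\,C\right)$ with $C=\mathcal{O}(1)$, and conclude that negativity forces $|\kappa|$ below a bound that shrinks as $\tau\to\infty$. That is a non sequitur: the $1/\tau$ prefactor does not affect the sign, so your formula would yield the $\tau$-\emph{independent} threshold $|\kappa|<1/(C\mu_k)$ --- exactly the equilibrium-case conclusion of Theorem \ref{theo:fp}, contradicting the statement $\kappa_c(\tau)\to 0$ that you are supposed to prove. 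The missing structural ingredient is the trivial Floquet exponent of the periodic orbit (time-translation invariance): it forces $\mathcal{H}(0,0)=0$, so the root branch $Y(\omega)$ of $\mathcal{H}(i\omega,Y)=0$ vanishes at $\omega=0$, and the asymptotic continuous spectrum $\gamma(\omega)=-\ln|Y(\omega)|+\ln|\sigma|$ diverges to $+\infty$ as $\omega\to 0$ for \emph{every} $\sigma=-\kappa\mu_j\neq 0$. Hence no $\tau$-uniform threshold can exist, which is precisely why the periodic case differs from the equilibrium case. Your remark that the strong/instantaneous spectrum ``poses no threat'' glosses over exactly the dangerous element: for the SL orbit the instantaneous spectrum is $\{-2\alpha,0\}$, and the $0$ is the source of this singular branch.

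Consequently, stability at finite large $\tau$ cannot be read off the limiting curves at all; it is a discrete-spectrum effect near $\omega=0$. The paper handles it by tracking the single eigenvalue bifurcating from $\lambda=0$ via the implicit function theorem: $\partial_\sigma\Re(\lambda)|_{\sigma=0}=-\Re\left[\partial_2\mathcal{H}(0,0)/\partial_1\mathcal{H}(0,0)\right]=:\alpha_P$, and all transverse modes move in the same direction since $\sigma=-\kappa\mu_j$ with $\mu_j>0$. For the SL system this is made explicit by first passing to the rotating frame $z=re^{i\beta t}$, which turns the orbit into an equilibrium and the variational equation into a constant-coefficient DDE whose delayed term carries the rotation matrix by the angle $\beta\tau$ --- dissolving your worry about the monodromy operator. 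One then computes $\mathcal{H}(i\omega,g)=g^{2}-2\cos(\beta\tau)(\alpha+i\omega)g+2\alpha\omega i-\omega^{2}$, hence $\partial_2\mathcal{H}(0,0)=-2\alpha\cos(\beta\tau)$, $\partial_1\mathcal{H}(0,0)=2\alpha$, and $\alpha_P=\cos(\beta\tau)$, which gives statements (I) and (II). This also corrects your reading of the excluded delays $\tau=(\pi+2M\pi)/(2\beta)$: they have nothing to do with resonance against the $1/\tau$ spectral spacing; they are exactly the degeneracy $\partial_2\mathcal{H}(0,0)=0$ at which the implicit-function argument, and with it the sign selection, breaks down.
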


The Fig. \ref{fig_colormap_maxlamb} will give the numerically computed stability region
from Theorem \ref{thm_SL}.
The full statement will be given Sec. \ref{sec_periodic_orbits}, Theorem \ref{cor_sync_periodc_orbits}.

\begin{remark}
Also, roughly speaking, the critical coupling parameter $\kappa_c(\tau)$ in Theorem \ref{thm_SL}, is written as $\kappa_c(\tau) = r_0(\tau)/\rho_L$.
\end{remark}

The main consequences of the theorems \ref{theo:fp} and \ref{thm_SL} for
different networks are presented as corollaries below. 
From the point of view of the network
structure, the stability of the synchronization manifold is related
to the spectral radius of the Laplacian matrix $\rho_{L}$.
Therefore, fixing $f$, $h$ and $\tau>0$ large enough,
one can study the effect of the changes in the network to the synchronization.

We give the asymptotic behavior of the synchronization window
for two important examples of complex networks, namely, heterogeneous
(e.g. Barab\'asi-Albert (BA) scale-free network),
and homogeneous network (e.g., Erd\"os-R\'eniy (ER) random graph).
More details about BA and ER networks and the proofs of the
corollaries that follow are given in Sec. \ref{sec_sync_loss}.

\begin{corollary}[Synchronization window for large BA networks] 
\label{theorem_BA} 
Consider the model \eqref{eq:net_mod_lap} and its assumptions
\ref{theo:fp} with a BA network with $n$ nodes. Then, for sufficiently
large $n$, the length of the synchronization window scales as $1/\sqrt{n}$.
\end{corollary}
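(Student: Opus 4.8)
The plan is to reduce the scaling of the synchronization window entirely to the scaling of the Laplacian spectral radius $\rho_L$, and then to feed in the known growth of the maximum degree in a BA graph. By Theorem~\ref{theo:fp} the synchronization window is the interval $(0,r_0/\rho_L)$, so its length equals $\kappa_c=r_0/\rho_L$. Since $r_0=r_0(f,h)$ depends only on the fixed isolated dynamics and coupling function and carries no $n$-dependence, the whole $n$-dependence of the window length sits in $\rho_L$. Thus it suffices to prove that $\rho_L\sim\sqrt{n}$ as $n\to\infty$.

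First I would bracket $\rho_L$ between multiples of the maximum degree $\Delta$. The Laplacian $L=D-A$ of an undirected graph is positive semidefinite, so $\rho_L$ is its largest eigenvalue, and one has the classical estimates
\begin{equation}
\Delta+1\le\rho_L\le 2\Delta,\label{eq_spectral_bracket}
\end{equation}
valid for any graph with at least one edge (see \cite{Mohar2004,Brouwer2012}). The upper bound is immediate from Gershgorin's theorem: the $i$-th disc is centred at $d_i$ with radius $\sum_{j\ne i}A_{ij}=d_i$, so every eigenvalue lies in $[0,2\Delta]$. The lower bound follows from the Loewner monotonicity $L(G)\succeq L(H)$ whenever $H$ is a spanning subgraph of $G$: taking $H$ to be the star $K_{1,\Delta}$ centred at a vertex of maximal degree (plus isolated vertices) gives $\rho_L(G)\ge\rho_L(K_{1,\Delta})=\Delta+1$. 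Consequently $\rho_L=\Theta(\Delta)$, and the scaling of the window is dictated solely by that of $\Delta$.

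It remains to supply the structural input $\Delta\sim\sqrt{n}$ for a BA network, which is the heart of the argument. The preferential-attachment mechanism yields a scale-free degree distribution $P(k)\sim k^{-\gamma}$ with exponent $\gamma=3$. For a power law with this exponent the typical degree cutoff --- equivalently the expected maximum degree --- grows as $k_{\max}\sim n^{1/(\gamma-1)}=n^{1/2}=\sqrt{n}$. Inserting $\Delta\sim\sqrt{n}$ into \eqref{eq_spectral_bracket} gives $\rho_L=\Theta(\sqrt{n})$, whence $\kappa_c=r_0/\rho_L\sim 1/\sqrt{n}$, which is the claim.

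The main obstacle is making the estimate $\Delta\sim\sqrt{n}$ rigorous, since in the BA model $\Delta$ is a random variable; the relation holds in expectation and with high probability as $n\to\infty$, so the conclusion must be read in that probabilistic sense. One should also verify that both sides of \eqref{eq_spectral_bracket} track $\Delta$ at the same rate, so that no stray constant or logarithmic factor corrupts the $\sqrt{n}$ behaviour. The spectral bracketing is elementary; the genuine work lies in the extreme-value control of the maximum degree of the preferential-attachment graph.
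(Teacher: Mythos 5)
Your strategy coincides with the paper's: by Theorem \ref{theo:fp} the window length is $r_0/\rho_L$ with $r_0=r_0(f,h)$ carrying no $n$-dependence, so everything reduces to $\rho_L=\Theta(\sqrt{n})$, which you obtain by bracketing $\rho_L$ between constant multiples of the maximum degree and then invoking $g_{\max}\sim\sqrt{n}$. Your spectral bracketing is correct and even self-contained: Gershgorin gives $\rho_L\le 2g_{\max}$, and the spanning-subgraph monotonicity argument with the star $K_{1,g_{\max}}$ gives $\rho_L\ge g_{\max}+1$. The paper instead quotes from \cite{Mohar2004} the bound $\frac{n}{n-1}g_{\max}\le\rho_L\le 2g_{\max}$ (Lemma \ref{lemma:rhoL_and_gmax}); the constants differ slightly but both versions serve the same purpose.

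The genuine gap is at the step you yourself call the heart of the argument: the claim $g_{\max}\sim\sqrt{n}$. Deducing it from \emph{the degree distribution is a power law with exponent $\gamma=3$, hence the cutoff is $n^{1/(\gamma-1)}=\sqrt{n}$} is a physics heuristic, not a proof. The degrees of a preferential-attachment graph are not i.i.d.\ samples from the limiting degree distribution (they are strongly dependent, and the degrees of the early vertices are not governed by the tail exponent at all), so the extreme-value formula $k_{\max}\sim n^{1/(\gamma-1)}$ cannot simply be invoked; it is exactly the statement requiring proof. This is precisely where the paper inserts its one non-elementary external input, M\'ori's theorem (Lemma \ref{lemma_BA}, Ref.~\cite{Mori}): with probability $1$, $n^{-1/2}g_{\max}\to\mu$, where the limit $\mu$ is almost surely positive and finite. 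With that almost-sure limit, the bracketing yields $\rho_L=\Theta(\sqrt{n})$ with probability $1$, and the corollary follows in that probabilistic sense. So your proposal is structurally identical to the paper's proof but leaves its only substantive ingredient unproved; to close it, replace the power-law-cutoff heuristic by the citation to M\'ori's result (or an equivalent rigorous extreme-value theorem for preferential-attachment graphs).
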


\begin{corollary}[Synchronization window for large ER networks] 
\label{theorem:ER}
Consider the model \eqref{eq:net_mod_lap} and its assumptions  
with a connected ER network with $n$
nodes. Then, for sufficiently large $n$, the length of the synchronization
window scales as $1/\ln n$. Furthermore, the synchronization window
is maximal near the percolation threshold.
\end{corollary}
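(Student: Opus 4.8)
The plan is to reduce the statement to a single spectral quantity and then apply standard concentration estimates for random graphs. By Theorem \ref{theo:fp} the synchronization window is $(0, r_0/\rho_L)$ with $r_0 = r_0(f,h)$ fixed once $f$ and $h$ are fixed; hence its length is $r_0/\rho_L$ and the entire $n$-dependence sits in the Laplacian spectral radius $\rho_L = \lambda_{\max}(L)$. It therefore suffices to prove that, for a connected ER graph $G(n,p)$ at or just above the connectivity threshold $p_c = \ln n / n$, one has $\rho_L = \Theta(\ln n)$ with high probability.

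First I would replace the spectral quantity by a combinatorial one using the elementary two-sided bound $\Delta + 1 \le \rho_L \le 2\Delta$, valid for any graph with at least one edge (the upper bound is Gershgorin applied to $L = D - A$, the lower bound is classical). This reduces everything to estimating the maximum degree $\Delta$. For the lower bound I would use $\Delta \ge \bar d = 2m/n$, where the number of edges $m$ concentrates around $\binom{n}{2}p$, so $\bar d = (1+o(1))np = \Theta(\ln n)$. For the upper bound I would apply a Chernoff/Poisson tail estimate to a single degree $d_i \sim \mathrm{Binomial}(n-1,p)$ together with a union bound over the $n$ vertices, yielding $\Delta \le C \ln n$ with high probability for a suitable constant $C$. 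Combining these gives $\rho_L = \Theta(\ln n)$, hence window length $r_0/\rho_L = \Theta(1/\ln n)$, which is the first assertion.

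For the maximality claim I would argue by monotonicity in the edge probability. Under the standard coupling of $G(n,p)$ across $p$, increasing $p$ only adds edges, so every degree and hence $\Delta$ is stochastically nondecreasing; since $\rho_L \ge \Delta + 1$ and the window length is $r_0/\rho_L$, the window shrinks as $p$ grows. Connectivity forces $p \ge p_c = (1+o(1))\ln n / n$, so the window is largest, to leading order, exactly when $p$ is as small as connectivity permits, that is, near the percolation (connectivity) threshold.

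The main obstacle is the degree-concentration step at the threshold. There the mean degree is only of order $\ln n$, so the Binomial fluctuations are comparable to the mean and one must control the maximum of $n$ weakly-concentrated degrees rather than rely on sharp concentration. A careful tail analysis shows that the extreme degree is still $c\,\ln n$ for a constant $c$ (indeed $c = e$ exactly at the threshold), so the correct order $\Theta(\ln n)$ of $\rho_L$ is preserved despite these fluctuations, and the $1/\ln n$ scaling as well as its maximality near the threshold follow.
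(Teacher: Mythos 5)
Your proposal is correct, and it follows the same skeleton as the paper's proof: reduce the statement via Theorem \ref{theo:fp} to the window length $r_0/\rho_L$, sandwich $\rho_L$ between max-degree bounds, and invoke concentration of ER degrees at the connectivity threshold. The differences are still worth recording. Where the paper cites Lemma \ref{lemma:rhoL_and_gmax} (the bound $\tfrac{n}{n-1}g_{\max}\le\rho_L\le 2g_{\max}$) and the Bollob\'as--Riordan tail estimate of Lemma \ref{lemma:Bollobas_ER}, you derive the analogous bound $\Delta+1\le\rho_L\le 2\Delta$ and the tail estimate by hand (Gershgorin plus Chernoff and a union bound), which is self-contained; more importantly, you supply the \emph{lower} bound $\Delta\ge 2m/n=(1+o(1))np$ explicitly. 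The paper glosses precisely this point: Lemma \ref{lemma:Bollobas_ER} only bounds $g_{\max}$ from above, yet the paper's proof plugs ``$g_{\max}\sim p_0\ln n$'' into the inequality $\rho_L\ge\tfrac{n}{n-1}g_{\max}$, silently using $g_{\max}\gtrsim np$ --- exactly the step your average-degree argument makes rigorous. Your monotone-coupling argument for the maximality claim is also cleaner than the paper's remark that $\rho_L$ is ``lowest possible'' as $p_0\to 1^+$, with one small repair needed: run the monotonicity through $\rho_L$ itself rather than through $\Delta$, since monotonicity of the lower bound $\Delta+1$ does not by itself imply that the window $r_0/\rho_L$ shrinks; this is immediate because adding an edge adds a positive semidefinite rank-one term to $L$, so $\rho_L$ is nondecreasing under edge addition, and then connectivity pins $p$ near $\ln n/n$. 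Finally, your observation that the extremal constant is $e$ exactly at the threshold (from optimizing $a\ln a-a+1>1$ in the union bound) is correct and is finer information than either the statement or the paper's proof provides.
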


\section{Preliminaries: variational equation and spectral theory}

\label{sec_spectrum_description}

In this section, we introduce the relevant elements to proof our main results.

\subsection{Dynamics near the synchronization manifold}

In order to study the local stability of the synchronization manifold
we linearize \eqref{eq:net_mod_lap} setting $x_j(t) = s(t) + \eta_j(t)$ with 
$\Vert \eta_j\Vert$ small. We get
\begin{equation}
\label{eq_block_model_aux1}
\dot{\eta}_j(t) = J(t)\eta_j(t) - \kappa \sum_{l=1}^n L_{lj} H \eta_l(t-\tau), \quad j=1,\cdots,n
\end{equation}
where $A_{lj} = g_l \delta_{lj} -  L_{lj}$ with $g_l$ the degree of the node $l$ and 
$\delta_{lj} = 1$
if $l=j$ and $0$ otherwise.
 Here, $J(t)  = D f(s(t))$
is the Jacobian matrix of the vector field $f$ along $s(t)$ and
$H=D h(0)$ is the Jacobian of $h$ at $0$.

Using $\eta = \left(\eta_1^{T},\cdots,\eta_n^{T}\right)$ we rewrite \eqref{eq_block_model_aux1}
as
$$\dot{\eta}(t) = [\mathbb{I}\otimes J(t)]\eta(t) - \kappa (L\otimes H)\eta(t-\tau),$$
where $\otimes$ is the Kronecker product and $\mathbb{I}$ is the identity matrix.
As $L$ is diagonalizable we use $L = R \Lambda R^{-1}$, with 
$\Lambda = \operatorname{diag}(\mu_1,\cdots,\mu_n)$, and the change of coordinates
$\eta= (R^{-1}\otimes \mathbb{I})\xi$ to get the block diagonal equation
\begin{equation}
\label{eq_block_model_aux2}
\dot{\xi}(t) = [\mathbb{I}\otimes J(t)]\xi(t) - \kappa (\Lambda\otimes H)\xi(t-\tau).
\end{equation}
Then, each block of \eqref{eq_block_model_aux2} is given by

\begin{equation}
\dot{\xi}_{j}(t)=J(t)\xi_{j}(t)-\kappa\mu_{j}H\xi_{j}(t-\tau),\quad j=2,\cdots,n.\label{eq_xi}
\end{equation}

Note that $\mu_{1}=0,$ and the variational equation corresponding
to $\mu_{1}$ is 
\[
\dot{\xi}(t)= J(t) \xi(t),
\]
which describes the perturbations within the synchronization manifold.
So, the interest concentrates now on the spectrum of \eqref{eq_xi} for $\mu_2,\cdots,\mu_n$.

It is shown in \cite{Yanchuck2005,Sieber2012,Lichtner2011,YanchukGiacomelli2017}
that the spectrum of \eqref{eq_xi} with large delay consists of two
parts which we introduce here following Ref.~\cite{Sieber2012}.
We consider the following two cases.
\begin{itemize}
\item[--]  Case 1: the synchronized solution $s(t)=s_0$ is an equilibrium
of the uncoupled system 
and then $J(t)=J$ does not depend on $t$. \\
\item[--] Case 2: $s(t)$ is a $T-$periodic solution of the uncoupled system, 
implying that $J(t+T)=J(t)$. 
\end{itemize}

\subsection{Spectral Theory}

Omitting the index $j$
and denoting $\sigma=-\kappa\mu$, Eq.~\eqref{eq_xi} can be rewritten
as 
\begin{equation}
\dot{\xi}(t)=J(t)\xi(t)+\sigma H\xi(t-\tau).\label{eq_periodic_variational_equation_2-1}
\end{equation}

In a general case, the Floquet theory can be used to study the stability
of (\ref{eq_periodic_variational_equation_2-1}) \cite{Hale1993}.
Using the following Floquet-like ansatz 
\begin{equation}
\xi(t)=y(t)e^{\lambda t},\label{eq_Floquet_antsaz-1}
\end{equation}
where $y(t)$ is a non-trivial $T$-periodic function, $\lambda$
and $e^{\lambda T}$ are Floquet exponent and Floquet multipliers
respectively, one can obtain a delay differential equation (DDE) for $y(t)$ 
\begin{equation}
\dot{y}(t)=[J(t)-\lambda\mathbb{I}]y(t)+\sigma e^{-\lambda\tau}Hy(t-\tau).
\label{eq_periodic_equation_tau-1}
\end{equation}

As the function $y(t)$ is $T$ periodic we can rewrite $y(t-\tau)$
in terms of a fraction of the period $T$ so that the large parameter
$\tau$ appears only as a parameter in $e^{-\lambda\tau}$ in Eq.
\eqref{eq_periodic_equation_tau-1}: 
\begin{equation}
\dot{y}(t)=[J(t)-\lambda\mathbb{I}]y(t)+\sigma e^{-\lambda\tau}Hy(t-\eta),\label{eq_periodic_equation_tau_2-1}
\end{equation}
where $\eta=\tau\,\mbox{mod}\,T$. The system \eqref{eq_periodic_equation_tau_2-1}
is said to have a non-trivial periodic solution $y(t)=y(t+T)$ when
it posses a Floquet multiplier equals to $1$.

Let $U$ be the monodromy operator of \eqref{eq_periodic_equation_tau_2-1}. Then,
$U = U(\lambda,\sigma e^{-\lambda\tau})$ and 
$y(t)$ is an eigenvector of $U$ associated with the eigenvalue $1$ (trivial Floquet multiplier),
that is, $[U(\lambda,\sigma e^{-\lambda\tau})-\mathbb{I}]y(t) = 0$. 
This leads to a characteristic equation \cite{Sieber2012}
\begin{equation}
\label{eq_characteristic_H}
\mathcal{H}(\lambda,\sigma e^{-\lambda\tau})=0.
\end{equation}
The solutions $\lambda$ of $\mathcal{H}$ forms the whole spectrum of
\eqref{eq_periodic_variational_equation_2-1}. In the case of $s(t) = s_0$ (equilibrium
solution) then 
$$
\mathcal{H}(\lambda,\sigma e^{-\lambda\tau}) = \det(-\lambda \mathbb{I} +J+\sigma
e^{-\lambda \tau}H)=0.
$$

We will define now certain objects called ``instantaneous spectrum'',
``strongly unstable spectrum'' as well as ``asymptotic continuous
spectrum''. Strictly speaking, these objects do not belong to the
spectrum of the synchronous states, i.e., they are not the Lyapunov
exponents of system (\ref{eq_periodic_variational_equation_2-1}).
However, they play an important role since the spectrum will be well
approximated by the ``strongly unstable spectrum'' and ``asymptotic
continuous spectrum'' as the delay becomes large. Another advantage
of these limiting spectra is that they can be much more easily found
or computed in comparison to the actual spectrum of system (\ref{eq_periodic_variational_equation_2-1}),
see more details in \cite{Yanchuck2005,Yanchuk2009,Yanchuk2010,Lichtner2011,Sieber2012,YanchukGiacomelli2017}.

\begin{definition}[Instantaneous spectrum and strongly unstable spectrum]
\label{def_instantaneous_spectrum} 
The set $\Gamma_{I}$ of all $\lambda\in\mathbb{C}$
for which the linear ODE system 
\[
\dot{y}=[J(t)-\lambda\mathbb{I}]y
\]
has a non-trivial periodic solution $y(t)=y(t+T)$ is called the \emph{instantaneous
spectrum}. The subset $\Gamma_{SU}\subset\Gamma_{I}$ of those $\lambda$
with positive real part is called the \emph{strongly unstable spectrum}.
\end{definition} 
\begin{remark} 
In the case when $J(t)=J$ (does not depend
on time), the instantaneous spectrum $\Gamma_{I}$ consists of the
eigenvalues of $J$. 
\end{remark}

 \begin{definition}[Asymptotic continuous spectrum]
 \label{def_assymptotic_spectrum} 
 For any $\omega\in\mathbb{R},$
the complex number $\lambda=\gamma(\omega)+i\omega$ belongs to the
\emph{asymptotic continuous spectrum} $\Gamma_{A}(\sigma)$ if the
DDE 
\begin{equation}
\dot{y}=[J(t)-i\omega\mathbb{I}]y+\sigma e^{-\gamma-i\phi}Hy(t-\eta)\label{eq:def-pcs}
\end{equation}
has a non-trivial periodic solution $y(t)=y(t+T)$ for some $\phi\in\mathbb{R}$.
Here $\eta=\tau\,\mbox{mod}\,T$. 
\end{definition}

 \begin{remark} If $J(t)$
does not depend on time, the asymptotic continuous spectrum can be
determined from the following equation 
\begin{equation}
\det\left[J-i\omega\mathbb{I}+\sigma e^{-\gamma-i\phi}H\right]=0.\label{eq:cheqeq}
\end{equation}
\end{remark} As follows from \cite{Yanchuck2005,Yanchuk2009,Yanchuk2010,Heiligenthal2011,Lichtner2011,Sieber2012,YanchukGiacomelli2017},
the spectrum of generic linear delay system (\ref{eq_periodic_variational_equation_2-1})
converges to either the instantaneous spectrum or to the curves in
the complex plane $\gamma(\omega)/\tau+i\omega,$ $\omega\in\mathbb{R}$,
where $\gamma(\omega)$ is defined from the asymptotic continuous
spectrum (note the division by $\tau$). The second part of the spectra
-- consisting of eigenvalues with asymptotically vanishing real parts
-- approaches a continuous curves asymptotically, while being still
discrete for any finite $\tau$; for this reason, it is called \emph{pseudo-continuous
spectrum}.

An example of a typical spectrum of the system with long delay is
shown in Fig.~\ref{fig:Numerically-computed-spectrum}. In particular,
the distances between neighboring eigenvalues within one curve of
the pseudo-continuous spectrum scale as $2\pi/\tau$ for large delay,
and in the limit of $\tau\to\infty$ they vanish and the eigenvalues
fill the curve \cite{Sieber2012}.

The union of the strongly unstable spectrum, $\Gamma_{SU}$, with
the asymptotic continuous spectrum $\Gamma_{A}$ forms the approximation
of the whole spectrum of Eq.~\eqref{eq_xi}, given that the instantaneous
spectrum does not contain eigenvalues with zero real parts and some
non-degeneracy conditions are fulfilled \cite{Lichtner2011,Sieber2012}.
Moreover, if the set of the strongly unstable spectrum is empty ($\Gamma_{SU}=\emptyset$),
and the asymptotic continuous spectrum is entirely contained on the
left side of the complex plane ($\Gamma_{A}\subset\mathbb{C}_{-}$)
then the trivial solution of Eq.~\eqref{eq_xi} is exponentially
asymptotically stable.

\section{Proof of the main theorems}

In this section we use the given description of the spectrum
of a general linear DDE to prove Theorems \ref{theo:fp} (persistence of equilibrium solution), 
and \ref{cor_sync_periodc_orbits}  (synchronization of periodic orbit).

The result obtained in Theorem \ref{theo:fp} could also be proved by using
 Lyapunov-Kravoskii functionals \cite{FRIDMAN2014271}. However, we use
the approach of studying the spectrum for large delay  
  not only because it is powerful, but it also serves
as a preparation to prove the result of synchronization of periodic solutions.

\subsection{Persistence of equilibrium solution: Theorem \ref{theo:fp}}

\label{sec_equilibrum_case}

In this section we specify the stability condition for the
case when system \eqref{eq:net_mod_lap} possesses a synchronous solution
$s_0,$ i.e. $f(s_0)=0$, and the Jacobian $J=D f(s_0)$
is a constant $q\times q$ matrix. The corresponding characteristic
equation is 
\begin{equation}
\det\left[-\lambda\mathbb{I}+J+\sigma He^{-\lambda\tau}\right]=0.\label{eq:ch_eq_eq}
\end{equation}
The real parts of the eigenvalues $\lambda$ determine the stability,
and, as it was discussed above, the whole spectrum converges to either
the strongly unstable spectrum or pseudo-continuous spectrum. Lemma
\ref{lemma_pseudo_spectrum} gives explicit dependence of the asymptotic
continuous spectrum on the coupling strength $\sigma$.

\begin{lemma}[Ref.\cite{Yanchuck2005}]
\label{lemma_pseudo_spectrum}
Consider the linear delay differential equation 
\begin{equation}
\dot{\xi}(t)=J\xi(t)+\sigma H\xi(t-\tau)\label{eq_xi_general}
\end{equation}
with $J,H\in\mathbb{R}^{q\times q}$ and $\det(H)\neq0$. Then, the
asymptotic continuous spectrum for (\ref{eq_xi_general}) is given
by the branches 
\begin{equation}
\{\gamma_{l}(\omega,\sigma)+i\omega\in\mathbb{C}:\,\gamma_{l}(\omega,\sigma)=-\ln\vert g_{l}(\omega)\vert+\ln|\sigma|\},\label{eq_branches_fix_point}
\end{equation}
where $l=1,\cdots,q$ and $g_{l}$ are complex roots of the polynomial
\begin{equation}
\det\left[-i\omega\mathbb{I}+J+gH\right]=0.\label{eq:ch-eq}
\end{equation}
\end{lemma}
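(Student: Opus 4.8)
The plan is to reduce the defining condition of the asymptotic continuous spectrum, which for constant $J$ is the determinant equation \eqref{eq:cheqeq}, to a polynomial equation in a single auxiliary complex variable, and then to read off the real part $\gamma$ by comparing moduli. First I would justify the remark following Definition \ref{def_assymptotic_spectrum}: for $J(t)\equiv J$ one looks for a constant (hence $T$-periodic for every $T$) solution $y(t)\equiv v\neq 0$ of \eqref{eq:def-pcs}. Substituting $\dot y=0$ gives $[J-i\omega\mathbb{I}+\sigma e^{-\gamma-i\phi}H]v=0$, which admits a nontrivial $v$ precisely when \eqref{eq:cheqeq} holds. Thus $\lambda=\gamma(\omega)+i\omega\in\Gamma_A(\sigma)$ if and only if there is some $\phi\in\mathbb{R}$ with $\det[-i\omega\mathbb{I}+J+\sigma e^{-\gamma-i\phi}H]=0$.

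Next I would introduce the single complex unknown $g:=\sigma e^{-\gamma-i\phi}$. With this substitution the condition becomes exactly \eqref{eq:ch-eq}, namely $\det[-i\omega\mathbb{I}+J+gH]=0$, which for each fixed $\omega$ is a polynomial equation in $g$. Because $\det H\neq 0$, expanding the determinant shows that the coefficient of $g^{q}$ equals $\det H\neq 0$, so the polynomial has degree exactly $q$ and therefore has exactly $q$ roots $g_1(\omega),\dots,g_q(\omega)$, counted with multiplicity. Consequently the determinant condition holds iff $g=g_l(\omega)$ for some $l\in\{1,\dots,q\}$, that is, iff $\sigma e^{-\gamma-i\phi}=g_l(\omega)$.

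Finally I would extract $\gamma$ and $\phi$ separately by comparing moduli and arguments. Since $|e^{-i\phi}|=1$, taking absolute values in $\sigma e^{-\gamma-i\phi}=g_l(\omega)$ yields $|\sigma|e^{-\gamma}=|g_l(\omega)|$, and taking logarithms gives the asserted branch $\gamma_l(\omega,\sigma)=-\ln|g_l(\omega)|+\ln|\sigma|$ of \eqref{eq_branches_fix_point}. The phase then imposes no further constraint on $\gamma$: choosing $\phi\equiv\arg\sigma-\arg g_l(\omega)\pmod{2\pi}$ makes $\sigma e^{-\gamma-i\phi}=g_l(\omega)$ hold exactly. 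This is admissible for every $\omega$ with $g_l(\omega)\neq 0$; the excluded values are those $\omega$ for which $i\omega$ is an eigenvalue of $J$, of which there are at most finitely many, and none when $J$ is the Jacobian at the stable equilibrium of Assumption \ref{assump_f}. Letting $\omega$ range over $\mathbb{R}$ and $l$ over $\{1,\dots,q\}$ then sweeps out precisely the $q$ curves in the statement.

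I do not expect a serious obstacle once the remark is in place, since the argument is essentially algebraic. The two points that need care are the degree count, which rests squarely on $\det H\neq 0$ and guarantees $q$ well-defined branches rather than fewer, and the observation that $\phi$ is a genuine free parameter that decouples from the modulus equation fixing $\gamma$. The genuinely hard analytic content of the surrounding theory---that the true spectrum of \eqref{eq_xi_general} at finite $\tau$ converges, after rescaling the real part by $\tau$, to these limiting curves---is not part of this lemma and is quoted from the cited references; here I would only be identifying the limiting curves themselves.
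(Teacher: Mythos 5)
Your proposal is correct, but note that the paper does not actually prove this lemma at all: it is quoted as a known result from Ref.~\cite{Yanchuck2005} and used as a black box in the proof of Theorem~\ref{theo:fp}. What you have written is therefore a self-contained derivation of a cited result, and it is the standard one: starting from the constant-$J$ characterization \eqref{eq:cheqeq}, substituting the single unknown $g=\sigma e^{-\gamma-i\phi}$, using $\det H\neq 0$ to get a degree-$q$ polynomial with exactly $q$ root branches $g_l(\omega)$, and then separating modulus from phase, with $\phi$ absorbing the argument so that only $|\sigma|e^{-\gamma}=|g_l(\omega)|$ constrains $\gamma$. All of these steps are sound, and your two flagged points of care (the degree count and the decoupling of $\phi$) are exactly the right ones. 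One small remark on your first step: the ``if and only if'' reduction from Definition~\ref{def_assymptotic_spectrum} to \eqref{eq:cheqeq} silently identifies ``nontrivial periodic solution'' with ``nontrivial constant solution.'' For constant coefficients a non-constant $T$-periodic solution is a Fourier mode $y(t)=e^{i\nu t}v$, $\nu=2\pi k/T$, and satisfying \eqref{eq:def-pcs} with such a mode amounts to the determinant condition at the shifted frequency $\omega+\nu$ (with a shifted phase), so it traces out the same family of curves only up to a vertical relabeling. This is immaterial here because for an equilibrium there is no intrinsic period and the paper's own Remark following Definition~\ref{def_assymptotic_spectrum} \emph{defines} the asymptotic continuous spectrum for constant $J$ via \eqref{eq:cheqeq}, which is precisely the characterization you work from; but if you wanted the lemma to follow literally from the periodic-orbit definition, that step would need the extra sentence.
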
 

The application of Lemma \ref{lemma_pseudo_spectrum} to
system \eqref{eq_xi} leads to the proof of Theorem \ref{theo:fp}.

\begin{proof}[Proof of Theorem \ref{theo:fp} (Persistence of equilibrium solution)]
The basic idea of the proof is to show that the strongly unstable
spectrum is empty and the asymptotic continuous spectrum is stable
if and only if the condition (\ref{eq_alpha_condition}) is satisfied.
The rest of the proof
elaborates this idea more carefully in details.

The instantaneous spectrum $\Gamma_{I}$ coincides with the spectrum
of $J$ and, by the Assumption \ref{assump_f}, the equilibrium solution
of the isolated system is asymptotically stable, thus there are no
eigenvalues of $J$ with positive real parts, hence $\Gamma_{SU}=\emptyset$.

Using Lemma \ref{lemma_pseudo_spectrum}
for the variational Eq. \eqref{eq_xi}. Then, the real part of the
asymptotic continuous spectrum is 
\begin{equation}
\gamma_{l,j}(\omega,\sigma)=-\ln\vert g_{l}(\omega)\vert+\ln\vert \kappa \mu_{j}\vert.\label{eq_gamma_assimpt_spectrum}
\end{equation}
The condition $\gamma_{l,j}(\omega,\sigma)<0$ is fulfilled if and
only if 
\begin{equation}
\vert \kappa \mu_{j}\vert<\vert g_{l}(\omega)\vert.\label{ineq_alphamuk}
\end{equation}
The asymptotic continuous spectrum lies strictly on the left side
of the complex plane if the inequality \eqref{ineq_alphamuk} holds
for all $j=2,\cdots,n$, $l=1,\cdots,q$, and $\omega\in\mathbb{R}$.
This leads to the condition 
\begin{equation}
\vert \kappa\vert <\dfrac{\min_{l}\inf_{\omega}\vert g_{l}(\omega)\vert}{\max_{j}\vert\mu_{j}\vert}=\frac{r_{0}}{\rho_{L}},\label{eq:111}
\end{equation}
where $r_{0}=\min_{l=1,\cdots,q}\inf_{\omega\in\mathbb{R}}\vert g_{l}(\omega)\vert$
exists and is always bounded from zero. Indeed, if we assume the opposite,
i.e., $r_{0}=0$, then it means that there exist such $l_{0}$ and
$\omega_{0}$ that $g_{l_{0}}(\omega_{0})=0$, and hence, $\det(-i\omega_{0}+J)=0$
implying that $i\omega_{0}$ belongs to the spectrum of $J$. In this
way we arrive at the contradiction to the assumption that the spectrum
of $J$ has strictly negative real parts. Moreover, $r_{0}=r_{0}(J,H)$
since the functions $g_{l}(\omega)$ are the roots of the characteristic
equation \eqref{eq:ch-eq}. The number $\rho_{L}=\max_{j=2,\cdots,n}\vert\mu_{j}\vert$
stands for the spectral radius of the Laplacian matrix $L$. Therefore,
under the condition \eqref{eq_alpha_condition}, the asymptotic continuous
spectrum $\Gamma_{A}\subset\mathbb{C}_{-}$ and therefore the zero
solution of \eqref{eq_xi} is stable.

Having both $\mbox{Re}\,\Gamma_{A}<0$ and $\Gamma_{SU}=\emptyset$,
Theorem \ref{thm:Jan} implies that there exists such $\tau_{0}>0$
that for all $\tau>\tau_{0}$ the equilibrium $s_0$ is locally
exponentially stable under the condition (\ref{eq:111}).

The statement about the instability of the equilibrium follows from
the fact that for $\vert \kappa\vert >\kappa_{c}$ there exist such $l\in\{1,\dots,q\}$,
$j\in\{2,\dots,n\}$ and $\omega\in\mathbb{R}$ that $\vert \kappa \mu_{j}\vert>\vert g_{l}(\omega)\vert$,
and, hence the real part of the asymptotic continuous spectrum is
positive $\gamma_{l,j}(\omega,\sigma)>0.$ Therefore, for sufficiently
large time-delays and for $\vert \kappa\vert >\kappa_{c}$, there will be eigenvalues
from the pseudo-continuous spectrum with positive real parts. 
\end{proof}

With the result of persistence of equilibrium solution proven, we move to the
case of periodic synchronous solution.

\subsection{Synchronization of periodic orbits}

\label{sec_periodic_orbits}

We have previously stated our result on the synchronization of periodic orbits for the SL oscillators (Theorem \ref{thm_SL}),
so, we could avoid the discussion about the characteristic equation $\mathcal{H}$. 
Now we present a generalization of the announced result  and we prove it in this section.

\begin{theorem}[Synchronization of periodic orbits for any finite delay] \label{cor_sync_periodc_orbits}
Consider
system \eqref{eq:net_mod_lap} and Assumption
\ref{assump_f-1}. Let an additional non-degeneracy assumption $\partial_{2}\mathcal{H}(0,0)\ne0$
be fulfilled, where $\mathcal{H}$ is defined by \eqref{eq_characteristic_H}.
Then, given $\tau$ fixed large enough, there exists $\kappa_{c}(\tau)>0$ such that one of the two
following statements hold: 
\begin{itemize}
\item[(I)] The synchronous periodic solution is locally exponentially 
stable for $0<\kappa<\kappa_{c}$ and unstable for $-\kappa_{c}<\kappa<0$. 
\item[(II)] The synchronous periodic solution is locally exponentially 
stable for $-\kappa_{c}<\kappa<0$ and unstable for $0<\kappa<\kappa_{c}$. 
\end{itemize}
Moreover, $\kappa_{c}\to0$ as $\tau\to\infty$. That is, for any $\kappa>0$, there exists
$\tau_0>0$ such that for all $\tau>\tau_0$ the synchronous periodic solution is unstable.
\end{theorem}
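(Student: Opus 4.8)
The plan is to locate the spectrum of the block variational equation \eqref{eq_xi} for the transverse modes $\mu_2,\dots,\mu_n$ and to show that the only eigenvalues able to cross the imaginary axis belong to the critical branch of the asymptotic continuous spectrum born from the trivial Floquet exponent $\lambda=0$. First I would record that, by Assumption \ref{assump_f-1}, the isolated orbit $s(t)$ is exponentially stable, so its instantaneous spectrum $\Gamma_{I}$ consists of the trivial Floquet exponent $\lambda=0$ (associated with $\dot s(t)$) together with exponents of strictly negative real part; hence $\Gamma_{SU}=\emptyset$. By the spectral approximation underlying Theorem \ref{thm:Jan}, stability of the synchronous orbit is then equivalent to all branches of the asymptotic continuous spectrum $\Gamma_{A}$ lying in $\mathbb{C}_{-}$, and the delicate branch is the one passing through $\lambda=0$ at $\kappa=0$.

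Next I would analyze $\mathcal{H}$ locally near $(\lambda,z)=(0,0)$, where $z$ abbreviates the second argument $\sigma e^{-\lambda\tau}$ of \eqref{eq_characteristic_H}. We have $\mathcal{H}(0,0)=0$ (the multiplier $1$ of the uncoupled orbit); the hypothesis gives $p_{2}:=\partial_{2}\mathcal{H}(0,0)\neq0$, and exponential stability implies that the multiplier $1$ is simple, so $p_{1}:=\partial_{1}\mathcal{H}(0,0)\neq0$. The implicit function theorem then yields a branch $\lambda=\lambda(z)$ with $\lambda(0)=0$ and $\lambda'(0)=-p_{2}/p_{1}=:c\neq0$. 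Substituting $z=\sigma e^{-\lambda\tau}=-\kappa\mu_{j}e^{-\lambda\tau}$ reduces the critical branch, to leading order, to the scalar transcendental equation
\begin{equation}
\lambda = C\,e^{-\lambda\tau},\qquad C:=-c\,\kappa\mu_{j}, \label{eq:planscalar}
\end{equation}
one copy for each transverse mode $\mu_{j}$. A key point I would stress is that $\mathcal{H}$, and hence $c$ and $C$, depend on $\eta=\tau\bmod T$; this residual-delay dependence is what makes the sign of the destabilization $\tau$-dependent, and in the Stuart--Landau case it collapses to the factor $\cos(\beta\tau)$ of Theorem \ref{thm_SL}.

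Then I would read off both assertions from \eqref{eq:planscalar} via its Lambert representation $\lambda\tau=W(C\tau)$. For $|C|\tau$ small the relevant root is $\lambda\approx C$, so $\operatorname{sign}(\operatorname{Re}\lambda)=\operatorname{sign}(\kappa)\cdot\operatorname{sign}\big(\operatorname{Re}(-c\,\mu_{j})\big)$; since the sign of $\operatorname{Re}(-c\,\mu_{j})$ is fixed by the data (and is uniform over the modes when $L$ is symmetric, as then $\mu_{j}>0$), this produces exactly the dichotomy (I)/(II). Increasing $|\kappa|$ pushes this root across the imaginary axis when $|C|\tau$ reaches an $\mathcal{O}(1)$ threshold (equal to $\pi/2$ in the real, negative normalization), which, taking the critical mode $|\mu_{j}|=\rho_{L}$, gives $\kappa_{c}(\tau)=c_{0}/(|c|\,\rho_{L}\,\tau)$ for a constant $c_{0}=\mathcal{O}(1)$; this matches the announced form $\kappa_{c}=r_{0}(\tau)/\rho_{L}$ and already shows $\kappa_{c}(\tau)\to0$. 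I would also note that the remaining branches of $\Gamma_{A}$, coming from the strictly stable Floquet exponents, stay in $\mathbb{C}_{-}$ for $|\kappa|\rho_{L}$ below a $\tau$-independent constant (their defining quantities are bounded away from zero, exactly as $r_{0}>0$ in Lemma \ref{lemma_pseudo_spectrum}), a bound dominated by the $\mathcal{O}(1/\tau)$ phase-mode bound for large $\tau$.

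Finally, for the ``always desynchronized'' statement I would fix $\kappa\neq0$ and let $\tau\to\infty$, so that $|C\tau|\to\infty$. The principal Lambert branch then gives $\lambda\tau=W_{0}(C\tau)\sim\ln(C\tau)$, hence $\operatorname{Re}\lambda\sim\ln|C\tau|/\tau>0$ while $\lambda\to0$, so the local expansion remains self-consistent and the orbit is unstable for all large $\tau$. The main obstacle is rigour in the passage from the formal reduction \eqref{eq:planscalar} to the true finite-$\tau$ spectrum: one must control the higher-order remainder of $\lambda=\lambda(z)$ uniformly in $\tau$ and invoke the spectral-approximation estimates of \cite{Lichtner2011,Sieber2012} behind Theorem \ref{thm:Jan} to guarantee that a genuine eigenvalue tracks the scalar root and that no other eigenvalue crosses the axis unnoticed; a secondary technical point is verifying that $\operatorname{Re}(-c\,\mu_{j})$ carries a single sign across the transverse modes so that the clean dichotomy holds.
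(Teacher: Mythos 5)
Your proposal is correct in substance and rests on the same two local facts as the paper's proof, namely $\mathcal{H}(0,0)=0$ with $\partial_{1}\mathcal{H}(0,0)\neq 0$ (simplicity of the trivial multiplier) and the hypothesis $\partial_{2}\mathcal{H}(0,0)\neq 0$, but your execution is genuinely different. The paper applies the implicit function theorem twice: once solving $\mathcal{H}(i\omega,Y)=0$ for $Y(\omega)$, so that the critical branch of the asymptotic continuous spectrum, $\gamma_{(1)}(\omega)=-\ln|Y(\omega)|$, blows up as $\omega\to 0$, forcing an unstable part of $\Gamma_{A}$ for every $\sigma\neq 0$ and hence instability for all sufficiently large $\tau$ (condition U-II of Lemma \ref{thm:Jan}); and once solving for $\lambda(\sigma)$, which yields the sign quantity $\alpha_{P}=-\Re\left[\partial_{2}\mathcal{H}(0,0)/\partial_{1}\mathcal{H}(0,0)\right]$ of \eqref{eq_alpha_P} and thus the dichotomy (I)/(II). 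You instead solve once for $\lambda(z)$ and substitute $z=\sigma e^{-\lambda\tau}$, reducing the critical branch to the Lambert prototype $\lambda=Ce^{-\lambda\tau}$ with $C=-c\,\kappa\mu_{j}$ and $c=-\partial_{2}\mathcal{H}(0,0)/\partial_{1}\mathcal{H}(0,0)$; your small-$|C|\tau$ regime reproduces exactly the paper's $\alpha_{P}$ dichotomy (indeed $\Re C=\sigma\alpha_{P}$ for real $\sigma$), and your large-$|C|\tau$ regime, $\Re\lambda\sim\ln|C\tau|/\tau>0$, is the finite-$\tau$ resolution of the paper's singular branch. What your route buys is the explicit threshold $|C|\tau=\mathcal{O}(1)$ and hence the quantitative scaling $\kappa_{c}(\tau)\sim c_{0}/(|c|\rho_{L}\tau)$, which the paper nowhere states (it only asserts $\kappa_{c}\to 0$); what it costs is the rigour you flag yourself: the truncation error of $\lambda(z)$ must be controlled uniformly in $\tau$ and the non-critical branches handled by the approximation theory of \cite{Sieber2012}, which is exactly the burden the paper discharges by invoking Lemma \ref{thm:Jan}, so you are no worse off. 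Two caveats. First, your opening claim that stability is \emph{equivalent} to $\Gamma_{A}\subset\mathbb{C}_{-}$ cannot be taken literally: the critical branch is singular and never lies entirely in $\mathbb{C}_{-}$, so the stability in (I)/(II) is a finite-$\tau$ effect that must be (and, in your later steps, is) argued at the level of the discrete roots; the paper's own proof carries the same tension, since it announces the verification of S-I--S-III and then actually argues perturbatively near $\lambda=0$. Second, the single-sign requirement on $\Re(-c\,\mu_{j})$ across transverse modes, which you correctly isolate as a hypothesis, is assumed implicitly in the paper as well (it takes the $\mu_{j}$ to be positive), i.e., both arguments genuinely require a real Laplacian spectrum for the clean dichotomy.
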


\begin{remark}
The co-dimension 1 condition $\partial_{2}\mathcal{H}(0,0)=0$ may
lead to isolated points, where the synchronization of the synchronous
periodic orbit cannot be achieved (see example in Sec.~\ref{subsec:Synchronous-Periodic-Orbit}).
\end{remark}

The proof of Theorem \ref{cor_sync_periodc_orbits} rely on the  
 following lemma, adapted from \cite{Sieber2012}
for the case of the stability of a periodic
synchronized solution $s(t)$ with respect to desynchronized perturbations
(transverse to the synchronization manifold).

\begin{lemma} 
\label{thm:Jan}
The synchronous periodic orbit $s(t)$ of system (\ref{eq:net_mod_lap})
with period $T$ is exponentially  stable with respect to
perturbations transverse to the synchronization manifold for all sufficiently
large $\tau$ if all of the following conditions hold: 
\begin{itemize}
\item[S-I] (No strong instability) All elements of the instantaneous spectrum
$\Gamma_{I}$ have negative real parts (this implies in particular
that the strongly unstable spectrum $\Gamma_{SU}$ is empty), 
\item[S-II] (Simple trivial multiplier) The trivial Floquet exponent $\lambda=0$
is simple. 
\item[S-III] (Weak stability) the asymptotic continuous spectrum $\Gamma_{A}(\sigma)$
is contained in $\left\{ \lambda\in\mathbb{C}:\,\Re(\lambda)<0\right\} $
for all $\sigma\in\left\{ -\kappa\mu_{2},\dots-\kappa\mu_{n}\right\} $
with $\mu_{j}$, $j=2,\dots,n$ being all nontrivial eigenvalues of
the Laplacian matrix $L$. 
\end{itemize}
And, it is exponentially unstable with respect
to perturbations transverse to the synchronization manifold for sufficiently
large $\tau$ if one of the following conditions hold:
\begin{itemize}
\item[U-I] (Strong instability) the strongly unstable spectrum $\Gamma_{SU}$
is non-empty, or 
\item[U-II] (Weak instability) a non-empty subset of the asymptotic continuous
spectrum $\Gamma_{A}(-\kappa\mu_{j})$ has positive real part for
nontrivial eigenvalue $\mu_{j}$ of the Laplacian matrix $L$. 
\end{itemize}
\end{lemma}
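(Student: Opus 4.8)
The plan is to reduce the transverse stability of the synchronous periodic orbit to the Floquet analysis of the decoupled blocks \eqref{eq_xi} and then to invoke the large-delay spectral splitting recalled in Section~\ref{sec_spectrum_description}. Since $L$ is diagonalizable, any perturbation transverse to the synchronization manifold $\mathcal{S}$ decomposes into the components $\xi_j$, $j=2,\dots,n$, each governed by \eqref{eq_periodic_variational_equation_2-1} with $\sigma=-\kappa\mu_j$. Transverse exponential stability of $s(t)$ is therefore equivalent to every Floquet exponent of every such block having strictly negative real part, uniformly over the finitely many $j$, for all sufficiently large $\tau$. So the task reduces to a single scalar-parameter family of DDEs indexed by $\sigma\in\{-\kappa\mu_2,\dots,-\kappa\mu_n\}$.

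For a fixed block I would appeal to the spectral approximation of \cite{Sieber2012,Lichtner2011}: under the stated non-degeneracy hypotheses, the Floquet spectrum of \eqref{eq_periodic_variational_equation_2-1} with large $\tau$ is approximated by the union $\Gamma_{SU}\cup\{\gamma(\omega)/\tau+i\omega\}$, namely the strongly unstable part (which converges to the instantaneous spectrum $\Gamma_I$) together with the pseudo-continuous part, whose real parts behave like $\gamma(\omega)/\tau$ with $\gamma(\omega)$ read off from $\Gamma_A(\sigma)$. Condition S-I forces $\Gamma_{SU}=\emptyset$ and keeps $\Gamma_I$ off the imaginary axis, so no growing mode survives from the instantaneous part. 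Condition S-III places each $\Gamma_A(-\kappa\mu_j)$ in the open left half-plane, so that $\gamma(\omega)<0$ for all $\omega$; hence every pseudo-continuous eigenvalue has real part of order $\gamma(\omega)/\tau<0$ for $\tau$ large. Taking the finite minimum over $j$ and over the finitely many branches $l=1,\dots,q$ yields a uniform negative bound on all transverse real parts, which is exactly transverse exponential stability.

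The delicate point — and the main obstacle — is the behaviour near $\lambda=0$. Because $s(t)$ is periodic, $\lambda=0$ is always a Floquet exponent (time-translation along the orbit) and $\mathcal{H}(0,0)=0$, so the limiting curve $\Gamma_A(\sigma)$ generically touches the origin and the discrete pseudo-continuous eigenvalues accumulate there with spacing $2\pi/\tau$. One must show that this marginal exponent belongs to the synchronization direction $\mu_1=0$ only, and that in the transverse blocks the nearby discrete eigenvalues genuinely fall strictly to the left of the imaginary axis rather than straddling it. Condition S-II (simplicity of the trivial multiplier) is precisely what rules out an extra zero exponent leaking into the transverse blocks, while the non-degeneracy condition $\partial_2\mathcal{H}(0,0)\neq0$ guarantees that the actual discrete eigenvalues track the sign of $\gamma(\omega)$ uniformly in a neighbourhood of $\omega=0$. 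Converting the strict sign information on the limiting curve into strict sign information on the genuine discrete spectrum, uniformly for all large but finite $\tau$, is the technical heart of the argument and is where the adaptation of \cite{Sieber2012} to the synchronization setting is required.

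The instability statements are more direct. If U-I holds, an element of $\Gamma_{SU}$ supplies a Floquet exponent with fixed positive real part that persists as $\tau\to\infty$ (it converges to a point of $\Gamma_I$ with $\Re>0$), yielding an exponentially growing transverse mode. If U-II holds, there is a transverse $\mu_j$ and some $\omega$ with $\gamma(\omega)>0$ in $\Gamma_A(-\kappa\mu_j)$; then for $\tau$ large the corresponding pseudo-continuous branch carries discrete eigenvalues with real part $\sim\gamma(\omega)/\tau>0$, again producing a growing transverse mode. In either case the orbit is exponentially unstable transverse to $\mathcal{S}$, which completes the argument.
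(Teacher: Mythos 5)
Your proposal is correct and follows essentially the same route as the paper: the paper likewise diagonalizes the Laplacian to reduce transverse stability to the one-parameter family \eqref{eq_periodic_equation_tau-1} with $\sigma\in\{-\kappa\mu_{2},\dots,-\kappa\mu_{n}\}$, and then defers the spectral heavy lifting --- including the delicate behaviour near the trivial Floquet exponent that you flag as ``the technical heart'' --- to Theorem 6 of \cite{Sieber2012}. The only ingredient the paper makes explicit that you omit is the translation of parameters: since $s(t)$ exists independently of the delay, the integer $N=\lfloor \tau/T\rfloor$ used in \cite{Sieber2012} can be replaced by $\tau$ treated as a continuous parameter, which is what justifies stating the asymptotics in terms of $\tau$.
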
 

\begin{proof} 
This theorem follows almost directly
from Theorem 6 of \cite{Sieber2012}, and we highlight the main features for our setup.

First of all, the existence of the periodic solution $s(t)$ does
not depend on time delay $\tau$. As a result, $\tau$ can be considered
as a continuous parameter here, instead of the parameter $N$ which
is an integer part of $\tau/T$ used in \cite{Sieber2012}. Hence,
the asymptotic statements with respect to $N$ are equivalently formulated
for $\tau$ in the present theorem.

Secondly, the stability of the periodic solution is determined by
the variational equation (\ref{eq_xi}) that splits into the part
along the synchronization manifold with $\mu_{1}=0$ and the remaining
part for the transverse perturbations with nonzero $\mu_{j}$. Hence,
the transverse stability is determined by the variational equation
(\ref{eq_periodic_equation_tau-1}) with $\sigma\in\left\{ -\kappa\mu_{2},\dots-\kappa\mu_{n}\right\} $.
The conditions for the stability S-I -- S-III guarantee that the spectrum
of the corresponding variational equations is stable for large enough
$\tau$. \end{proof}

The linear stability of a synchronous
periodic orbit is governed by the linearized equation \eqref{eq_periodic_variational_equation_2-1}
with periodic $J(t)$. In Ref. \cite{Sieber2012} the authors proved
results about the stability of this equation for large $\tau$. In
particular, the existence of a holomorphic function $\mathcal{H}:\,\Omega_{1}\times\Omega_{2}\subseteq\mathbb{C}\times\mathbb{C}\to\mathbb{C}$
is shown, such that $\lambda$ is a Floquet exponent of the linear
DDE \eqref{eq_periodic_variational_equation_2-1} if and only if 
$
\mathcal{H}\left(\lambda,\sigma e^{-\lambda\tau}\right)=0
$ (see Eq. \eqref{eq_characteristic_H}).

The function $\mathcal{H}$ in this case is analogous to the characteristic
equation (\ref{eq:ch_eq_eq}) for the case of equilibrium. The main
difference of the periodic case is that the function $\mathcal{H}$
is not given explicitly. The asymptotic continuous spectrum is determined
from the equation 
\begin{equation}
\mathcal{H}\left(i\omega,\sigma e^{-\gamma}e^{-i\phi}\right)=0,\label{eq:psdf-sigma}
\end{equation}
(compare (\ref{eq:psdf-sigma}) and (\ref{eq:cheqeq})). Here we should
emphasize that, in contrast to Ref.~\cite{Sieber2012}, we explicitly
write the parameter $\sigma$ in the argument of the function $\mathcal{H}$
since the dependence on $\sigma$ is of interest for this study. Moreover,
this parameter can be eliminated from Eq.~\eqref{eq:psdf-sigma}
by the transformation 
\[
\gamma_{(1)}+i\phi_{(1)}:=\gamma+i\phi-\ln|\sigma|-i\phi_{\sigma}
\]
leading to the equation 
\[
\mathcal{H}\left(i\omega,e^{-\gamma_{(1)}}e^{-i\phi_{(1)}}\right)=0
\]
which is the same as Eq.~\eqref{eq:psdf-sigma} but with $\sigma=1$.
Here we denoted $\sigma=|\sigma|e^{i\phi_{\sigma}}$.
As a result, the following Lemma holds: 
\begin{lemma}
 The point $\gamma+\ln|\sigma|+i\omega\in\mathbb{C}$
belongs to the asymptotic continuous spectrum of system (\ref{eq_periodic_variational_equation_2-1})
if and only if the point $\gamma+i\omega$ belongs to the asymptotic
continuous spectrum of system (\ref{eq_periodic_variational_equation_2-1})
with $\sigma=1$. 
\end{lemma}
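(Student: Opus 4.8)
The plan is to derive the lemma directly from Definition~\ref{def_assymptotic_spectrum} and the characteristic equation \eqref{eq:psdf-sigma}, exploiting the fact that the phase parameter $\phi$ appearing in the definition of $\Gamma_A$ is a free real variable that can absorb the argument of $\sigma$, while the modulus $|\sigma|$ produces a rigid real translation of $\gamma$ by $\ln|\sigma|$.

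First I would write $\sigma=|\sigma|e^{i\phi_{\sigma}}$ in polar form and recall that, by Definition~\ref{def_assymptotic_spectrum} together with \eqref{eq:psdf-sigma}, a point $\gamma'+i\omega$ lies in $\Gamma_A(\sigma)$ if and only if there exists $\phi\in\mathbb{R}$ with $\mathcal{H}\!\left(i\omega,\sigma e^{-\gamma'}e^{-i\phi}\right)=0$. Applying this with $\gamma'=\gamma+\ln|\sigma|$, the second argument of $\mathcal{H}$ simplifies to
\[
\sigma e^{-(\gamma+\ln|\sigma|)}e^{-i\phi}
=|\sigma|\,e^{i\phi_{\sigma}}\,|\sigma|^{-1}e^{-\gamma}e^{-i\phi}
=e^{-\gamma}e^{-i(\phi-\phi_{\sigma})},
\]
so that $\gamma+\ln|\sigma|+i\omega\in\Gamma_A(\sigma)$ if and only if there is $\phi\in\mathbb{R}$ with $\mathcal{H}\!\left(i\omega,e^{-\gamma}e^{-i(\phi-\phi_{\sigma})}\right)=0$.

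The decisive step is the observation that as $\phi$ runs over $\mathbb{R}$, the shifted phase $\phi_{(1)}:=\phi-\phi_{\sigma}$ also runs over all of $\mathbb{R}$. Hence the existence of a suitable $\phi$ is equivalent to the existence of a suitable $\phi_{(1)}$ solving $\mathcal{H}\!\left(i\omega,e^{-\gamma}e^{-i\phi_{(1)}}\right)=0$, which is exactly the condition from Definition~\ref{def_assymptotic_spectrum} for $\gamma+i\omega$ to belong to the asymptotic continuous spectrum of \eqref{eq_periodic_variational_equation_2-1} with $\sigma=1$. Reading this chain of equivalences in both directions establishes the claim for every $\sigma\neq0$.

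I do not anticipate a genuine obstacle here; the only point requiring care is the clean separation of the two roles played by $\sigma$. Its modulus enters the characteristic equation as the rigid translation $\gamma\mapsto\gamma-\ln|\sigma|$, which cannot be undone, whereas its argument $\phi_{\sigma}$ enters only through the existentially quantified phase $\phi$ and is therefore invisible to the membership condition. Keeping this distinction explicit is what makes the equivalence hold uniformly in $\sigma$, and it is precisely this reduction to the normalized case $\sigma=1$ that will later allow us to track the dependence of $\Gamma_A$ on the coupling strength through the single real parameter $\ln|\kappa\mu_{j}|$.
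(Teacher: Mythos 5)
Your proof is correct and takes essentially the same route as the paper: the paper eliminates $\sigma$ from $\mathcal{H}\left(i\omega,\sigma e^{-\gamma}e^{-i\phi}\right)=0$ via the single transformation $\gamma_{(1)}+i\phi_{(1)}:=\gamma+i\phi-\ln|\sigma|-i\phi_{\sigma}$ with $\sigma=|\sigma|e^{i\phi_{\sigma}}$, which is exactly your polar decomposition absorbing $\phi_{\sigma}$ into the existentially quantified phase and $\ln|\sigma|$ into the rigid shift of $\gamma$. Your explicit remark that $\phi\mapsto\phi-\phi_{\sigma}$ ranges over all of $\mathbb{R}$ just spells out what the paper's substitution leaves implicit.
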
 

\begin{remark} 
The main effect of the feedback
strength $\sigma$ for strong delays is the shift of the asymptotic
continuous spectrum by the value $\ln|\sigma|$. 
\end{remark}

\begin{proof}[Proof of Theorem \ref{cor_sync_periodc_orbits}]
The proof consists on showing that the conditions S-I -- S-III of Lemma \ref{thm:Jan}
are satisfied.

We remark that 
\[
\mathcal{H}(0,0)=0
\]
since $\mathcal{H}(\lambda,0)$ is the characteristic equation of
the uncoupled system, which possesses one simple trivial multiplier
by Assumption \ref{assump_f}. Consider now the implicit function
problem 
\begin{equation}
\mathcal{H}(i\omega,Y)=0.\label{eq_h_periodic_characteristic}
\end{equation}
It has a unique smooth solution $Y=Y(\omega)$ with $Y(0)=0$ provided
$\partial_{2}\mathcal{H}(0,0)\ne0$. As a result, the asymptotic continuous
spectrum 
\begin{equation}
\gamma_{(1)}(\omega)=-\ln\left|Y(\omega)\right|\label{eq_gamma_1}
\end{equation}
has a singularity at $\omega=0$ and $\lim_{\omega\to0}\gamma_{(1)}(\omega)=\infty$.
Hence, the asymptotic continuous spectrum of (\ref{eq_periodic_variational_equation_2-1})
is singular as well 
\[
\lim_{\omega\to0}\gamma(\omega)=\infty
\]
independently of the value of $\sigma$. Hence, the asymptotic continuous
spectrum possesses always an unstable part, which implies instability
for large enough values of time-delays. 

We know that
for $\sigma=0$ the characteristic equation $\mathcal{H}(\lambda,0)=0$
possesses one simple zero root $\lambda=0$, hence $\partial_{1}\mathcal{H}(0,0)\ne0$.
We find how the real parts of these eigenvalues change if $\sigma$
deviates from zero using the implicit function theorem for the equation
$\mathcal{H}(\lambda(\sigma),\sigma e^{-\lambda(\sigma)\tau})=0$.
Since 
\[
\mathcal{\partial_{\lambda}H}(\lambda,\sigma e^{-\lambda\tau})|_{\sigma=0,\lambda=0}=\partial_{1}\mathcal{H}(0,0)\ne0,
\]
we have the unique solution $\lambda(\sigma)$ for small $\sigma$
and $\partial_{\sigma}\lambda|_{\sigma=0}=-\partial_{2}\mathcal{H}(0,0)/\partial_{1}\mathcal{H}(0,0)$.
Here $\partial_{2}\mathcal{H}(0,0)\ne0$ by assumption. Hence 
\begin{equation}
\partial_{\sigma}\Re\left(\lambda\right)|_{\sigma=0}=-\Re\left[\frac{\partial_{2}\mathcal{H}(0,0)}{\partial_{1}\mathcal{H}(0,0)}\right]=:\alpha_{P}.\label{eq_alpha_P}
\end{equation}
In particular, this expression shows that for $\alpha_{P}>0$, the
periodic solution will be destabilized for positive $\sigma$, and
stabilized for negative $\sigma$. For $\alpha_{P}<0$, the stabilization
occurs for positive $\sigma$ and destabilization for negative. It
is important to notice, that the stabilization (or destabilization)
occurs for all transverse modes simultaneously, since the eigenvalues
$\mu_{2},\dots,\mu_{n}$ of the Laplacian matrix are positive and
$\sigma$ can admit values $-\kappa\mu_{j}$, which have the same
sign for all $\mu_{j}$, $j=2,\dots,n$.

\end{proof}

\section{Example: Coupled Stuart-Landau systems\label{subsec:Example:-The-Stuart-Landau}}

\label{sec_example}

Let us consider an example of the ring of coupled Stuart-Landau (SL)
oscillators shown in Fig.~\ref{fig:grafofc}.

\begin{figure}[!ht]
\centering \begin{tikzpicture}

\tikzset{vertex/.style = {shape=circle,draw,minimum size=1.5em}}
\tikzset{edge/.style = {->,> = latex'}}
\node[vertex] (1) at  (0,1) {$1$};
\node[vertex] (2) at  (-2,1) {$2$};
\node[vertex] (3) at  (-2,-1) {$3$};
\node[vertex] (4) at  (0,-1) {$4$};

\draw[edge][bend right] (1) to (2);
\draw[edge][bend right] (2) to (3);
\draw[edge][bend right] (3) to (4);
\draw[edge][bend right] (4) to (1);

\end{tikzpicture} \caption{\label{fig:grafofc} A directed ring network with 4 nodes.}
\end{figure}
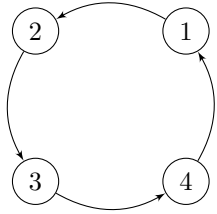

The dynamics of a node $j$ 
is given by \eqref{eq_SL_adj}. Throughout this section, we use the identity as the coupling function. We rewrite Eq. \eqref{eq_SL_adj},
in terms of the Laplacian matrix as 
\begin{align}
\dot{z}_{j}=(\alpha+i\beta)z_{j}-z_{j}|z_{j}|^{2}-\kappa\sum_{l=1}^{n}{L_{jl}}z_{l}(t-\tau)\label{eq_SL}
\end{align}

Figure \ref{fig_pcs_and_sus} shows the spectrum of
two coupled Stuart-Landau oscillators linearized at the origin and
coupling function being the identity.

\begin{figure}[!ht]
\centering \includegraphics[scale=0.4]{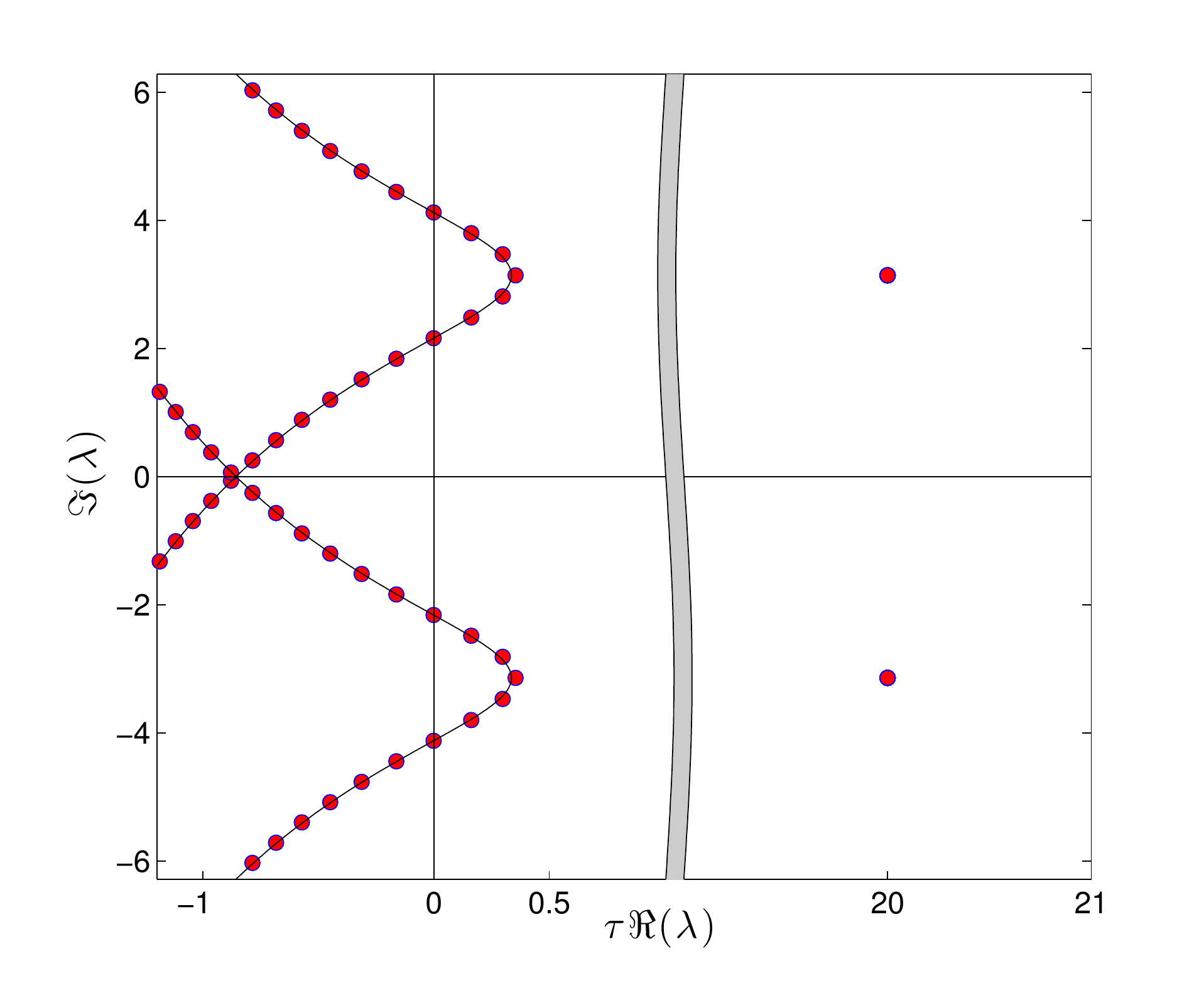}
\caption{\label{fig:Numerically-computed-spectrum}Numerically computed spectrum
for the equilibrium of two Stuart-Landau oscillators (see Sec.~\ref{subsec:Example:-The-Stuart-Landau})
coupled as in \eqref{eq:net_mod_lap} with identity coupling function
and parameters $\alpha=1$, $\beta=\pi$, $\tau=20$, and $\kappa=0.7$.
The points approaching the curve on the left side of the figure belongs
to the pseudo-continuous spectrum and the isolated points on the right
belongs to the strongly unstable spectrum. Solid lines show the re-scaled
asymptotic continuous spectrum $\Gamma_{A}$. The gray strip represents
a break on the figure, which is necessary due to the different scales
of the two parts of the spectrum.}
\label{fig_pcs_and_sus} 
\end{figure}

\subsection{Persistence of equilibrium}

Firstly, we consider the case $\alpha<0$ (equilibrium solution) so
that the eigenvalues of the uncoupled system have negative real parts.
The Jacobian matrix at zero $z=0$ has the eigenvalues $\alpha\pm i\beta$.
Therefore, the strongly unstable spectrum is empty, and the whole
spectrum of the zero equilibrium for the network system \eqref{eq_xi}
consists only of the asymptotic-continuous spectrum for long delays.

The laplacian matrix of the network in Fig. \ref{fig:grafofc} has
spectral radius $\rho_{L}=2$. In order to compute the value $r_{0}$
from the condition (\ref{eq_alpha_condition}) of Theorem \ref{theo:fp}
we compute the functions $g_{l}(\omega)$, $l=1,2$ using Eq.~(\ref{eq:ch-eq}):
\[
g_{1,2}(\omega)=i(\omega\pm\beta)-\alpha.
\]
Hence, we find $r_{0}=\min_{l}\inf_{\omega}\vert g_{l}(\omega)\vert=|\alpha|$.
Therefore, the synchronization manifold for the considered network
is locally exponentially stable if and only if 
\begin{equation}
0<\kappa<\kappa_{c}=\frac{|\alpha|}{2}\label{eq:sl-syn}
\end{equation}
for sufficiently long time delay $\tau$ (and unstable if $\kappa>\kappa_{c}$).

Fig. \ref{fig:Time-series-of} illustrates the convergence of the
trajectories to the equilibrium (left panel) for the case
when the condition (\ref{eq:sl-syn}) is fulfilled and the absence
of convergence in the opposite case. The detailed parameter values
are given in the figure's caption. Moreover, in order to compute the
synchronization error we compare each solution to a fixed one taking
the norm of maximal difference, that is, we measure the synchronization
error by $\max_{j}\Vert z_{1}(t)-z_{j}(t)\Vert$.

\begin{figure}[ht]
\centering 
\noindent\makebox[1\textwidth]{
\includegraphics[scale=0.37]{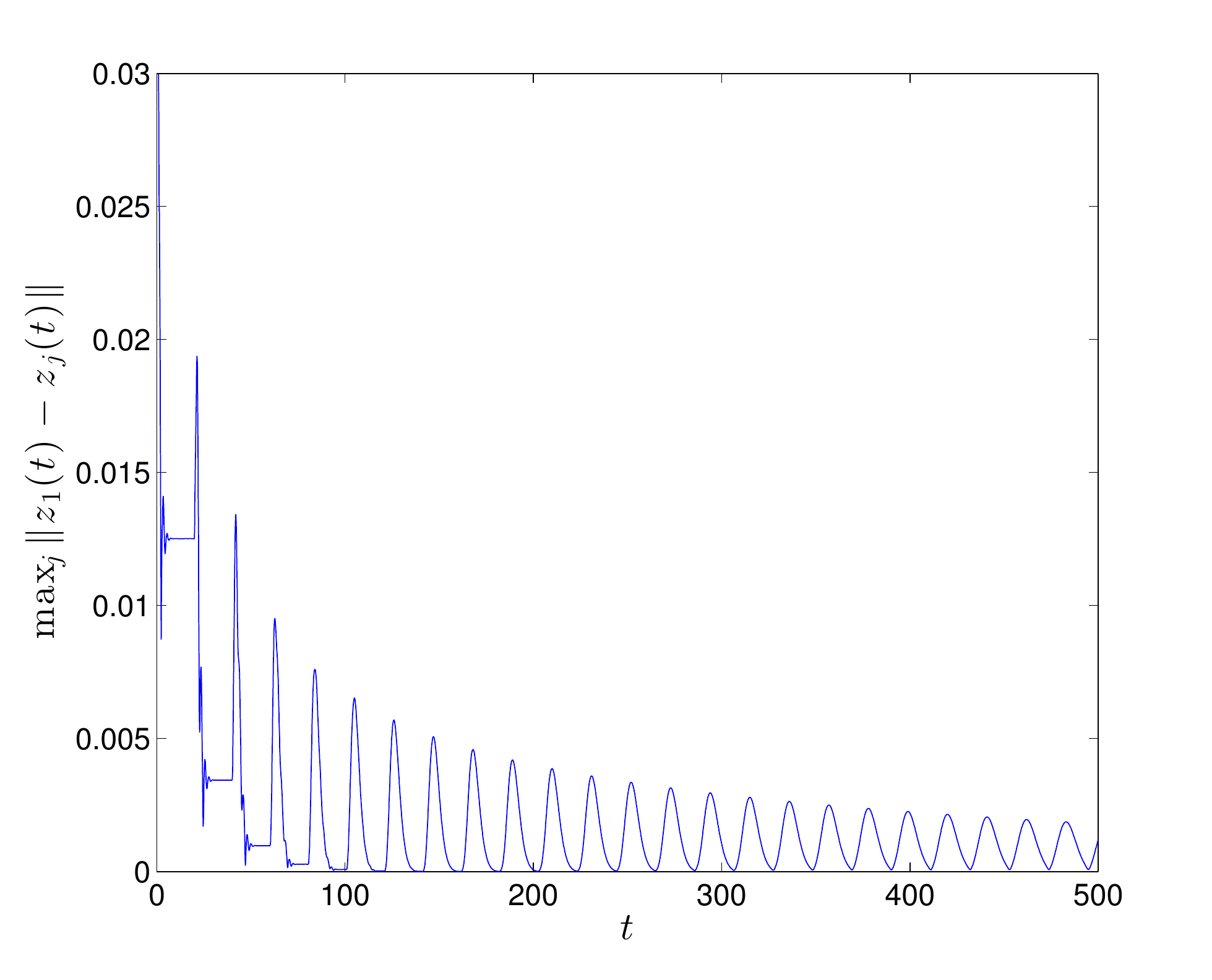} \includegraphics[scale=0.37]{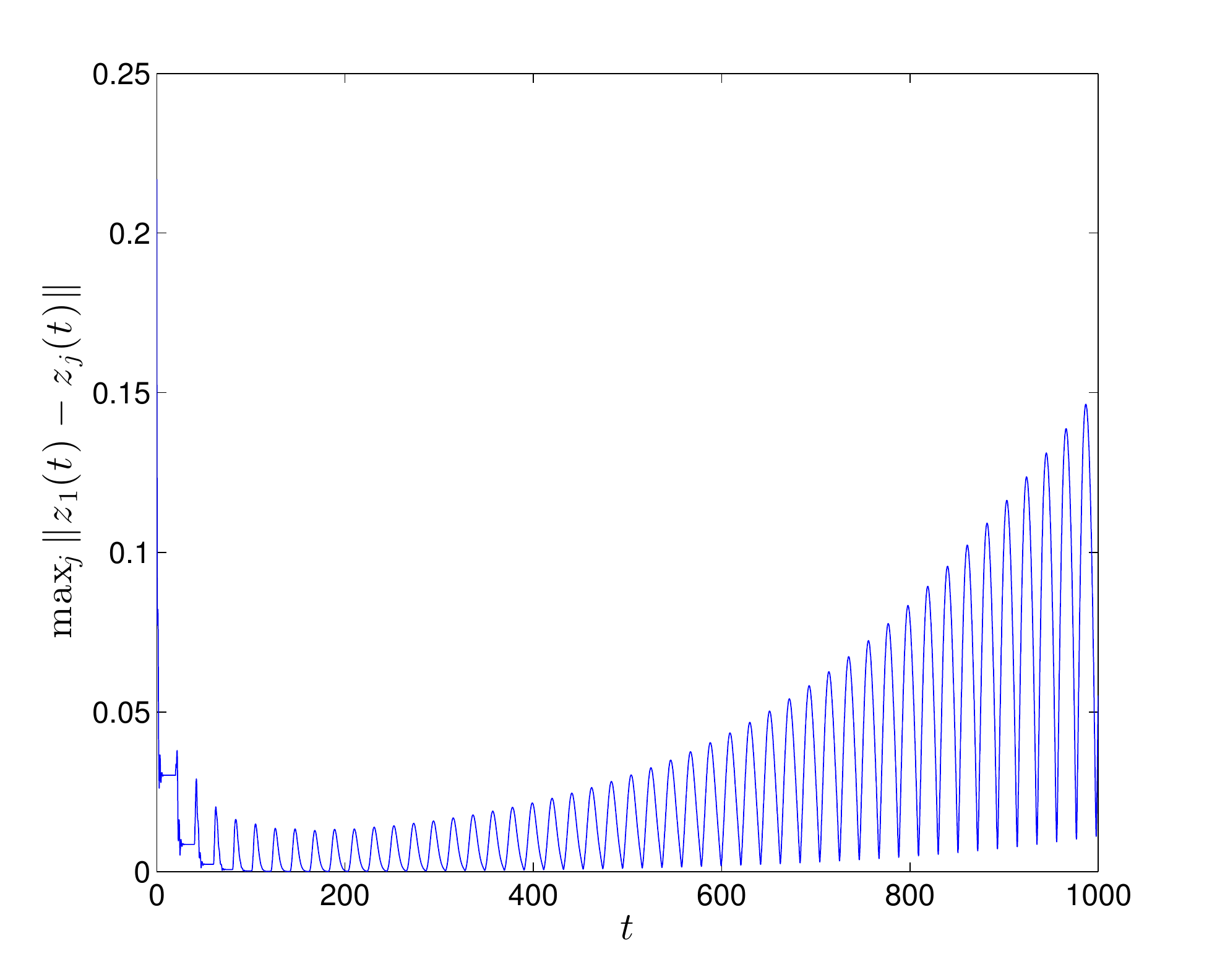} 
}

\noindent \caption{\label{fig:Time-series-of} Time series of the synchronization error
for Eq. \eqref{eq_SL_adj} and network given in Fig. \ref{fig:grafofc}
where $\kappa=0.49$ for the left figure and $\kappa=0.51$ for the
right. Other parameters are $\alpha=-1$, $\beta=\pi$, $\tau=100$.
The history functions were taken as constant and non-zero. }
\end{figure}

\subsection{Periodic orbit synchronization \label{subsec:Synchronous-Periodic-Orbit}}

Let us consider the case $\alpha>0$ when the synchronous periodic
solution $\sqrt{\alpha}e^{i\beta t}$ exists. In this section we prove Theorem \ref{thm_SL}.

Consider the transformation $z(t)=r(t)e^{i\beta t}$, then Eq. \eqref{eq_SL}
reads, in terms of $r(t)$, 
\begin{align}
\dot{r}_{j}=\left(\alpha-|r_{j}|^{2}\right)r_{j}-\kappa e^{-i\beta\tau}\sum_{l=1}^{n}L_{jl}r_{j}(t-\tau).\label{eq_y_periodic}
\end{align}
Note that the solution $z(t)=\sqrt{\alpha}e^{i\beta t}$ is transformed
into a family of equilibria. The variational equation of \eqref{eq_y_periodic}
(considering real and imaginary parts) around the equilibrium solution
$r=\sqrt{\alpha}$, equivalent to \eqref{eq_xi}, is 
\begin{align}
\dot{\xi}_{j}(t)=\begin{pmatrix}-2\alpha & 0\\
0 & 0
\end{pmatrix}\xi_{j}(t)-\kappa\mu_{j}\begin{pmatrix}\cos(\beta\tau) & -\sin(\beta\tau)\\
\sin(\beta\tau) & \cos(\beta\tau)
\end{pmatrix}\xi_{j}(t-\tau).\label{eq_xi_SL_polar}
\end{align}
where $\mu_{j}\geq0$ are the eigenvalues of $L$. For short, we write
Eq. \eqref{eq_xi_SL_polar} as $\dot{\xi}_{j}(t)=J_{0}\xi_{j}(t)-\kappa\mu_{j}\mathcal{T}\xi_{j}(t-\tau)$.

The instantaneous spectrum can be computed using Definition \ref{def_instantaneous_spectrum}.
It consists of the eigenvalues of the non-delayed part of \eqref{eq_xi_SL_polar}
and reads $\Gamma_{I}=\left\{ -2\alpha,0\right\} $. The eigenvalue
$0$ in the instantaneous spectrum is associated with the trivial
Floquet multiplier producing a singularity in the asymptotic spectrum.
Then, as predicted by Theorem \ref{cor_sync_periodc_orbits},
the synchronization cannot be achieved for an arbitrary large delay.
However,  Theorem \ref{cor_sync_periodc_orbits} guarantee the existence of a
 synchronization interval $(0,\kappa_{c})$ (or $(\kappa_c,0)$ with $\kappa_c<0$) with the decreasing
length with increasing $\tau$.

The asymptotic continuous spectrum can be computed as in the case
of equilibrium solution. Using Eq. \eqref{eq:cheqeq}, one obtains
$\gamma_{j,l}(\omega)=-\ln|g_{l}(\omega)|+\ln|\kappa\mu_{j}|$ where
$g_{l}(\omega)$ are the solutions of 
\begin{equation}
\det\left[-i\omega\mathbb{I}_{2}+J_{0}+g\mathcal{T}\right]=0.\label{eq_charc_g_periodic}
\end{equation}
Note that Eqs. \eqref{eq_h_periodic_characteristic} and \eqref{eq_charc_g_periodic}
are the same. Hence, we write $$\mathcal{H}(i\omega,g)=\det\left[-i\omega\mathbb{I}_{2}+J_{0}+g\mathcal{T}\right]$$
or, more specifically, 
\[
\mathcal{H}(i\omega,g)=g^{2}-2\cos(\beta\tau)(\alpha+i\omega)g+2\alpha\omega i-\omega^{2}.
\]
The non-degeneracy condition $\partial_{2}\mathcal{H}(0,0)\neq0$
(assumption in Theorem \ref{cor_sync_periodc_orbits})
is satisfied since $\partial_{2}\mathcal{H}(0,0)=-2\alpha\cos(\beta\tau)\neq0$
provided $\tau\neq(\pi+2M\pi)/(2\beta)$, $M\in\mathbb{N}$.  

The solution of \eqref{eq_charc_g_periodic} reads 
\begin{align}
g_{\pm}(\omega)=(\alpha+i\omega)\cos(\beta\tau)\pm\left[\alpha^{2}\cos^{2}(\beta\tau)+(w^{2}-2\alpha\omega i)\sin^{2}(\beta\tau)\right]^{1/2}.\label{eq_assimp_periodic}
\end{align}
From \eqref{eq_assimp_periodic}, we get $g_{-}(0)=0$ and $g_{+}(0)=2\alpha\cos(\beta\tau)\neq0$
provided $\tau\neq(\pi+2M\pi)/(2\beta)$, $M\in\mathbb{N}$. Therefore,
the singularity occurs uniquely at the function $g_{-}(\omega)$.

In order to obtain $\alpha_{P}$, as introduced in
Eq. \eqref{eq_alpha_P}, we compute $\partial_{1}\mathcal{H}(0,0)=2\alpha$.
Hence, $\alpha_{P}=\cos(\beta\tau)$. As $\sigma=-\kappa\mu_{j}$,
this implies that synchronization occurs for negative $\sigma$ when
$\cos(\beta\tau)>0$ and for positive $\sigma$ otherwise (positive
$\sigma$ is obtained if $\kappa<0$). 

We can also compute the spectrum of \eqref{eq_xi_SL_polar} by using
Eq. \eqref{eq:ch_eq_eq}. So, we get the transcendental equation 
\begin{align}
\lambda^{2}-2\left(\sigma\cos(\beta\tau)e^{-\lambda\tau}-\alpha\right)\lambda-\sigma e^{-\lambda\tau}\left(2\cos(\beta\tau)-\sigma e^{-\lambda\tau}\right)=0\label{eq_transcedental_SL_periodic}
\end{align}
with $\sigma=-\kappa\mu_{j}$. Eq. \eqref{eq_transcedental_SL_periodic}
can be numerically solved.

We compare the solutions of \eqref{eq_transcedental_SL_periodic}
to the analytical approximation. The asymptotic spectrum (given in
terms of Eq. \eqref{eq_assimp_periodic}) and the spectrum points
(solutions of Eq. \eqref{eq_transcedental_SL_periodic}) are given
in Fig. \ref{fig_pseudo_and_assympt_spectrum_SL_periodic_tau20} with
parameters given in the figure's caption. 

\begin{figure}[!ht]
\centering \includegraphics[scale=0.4]{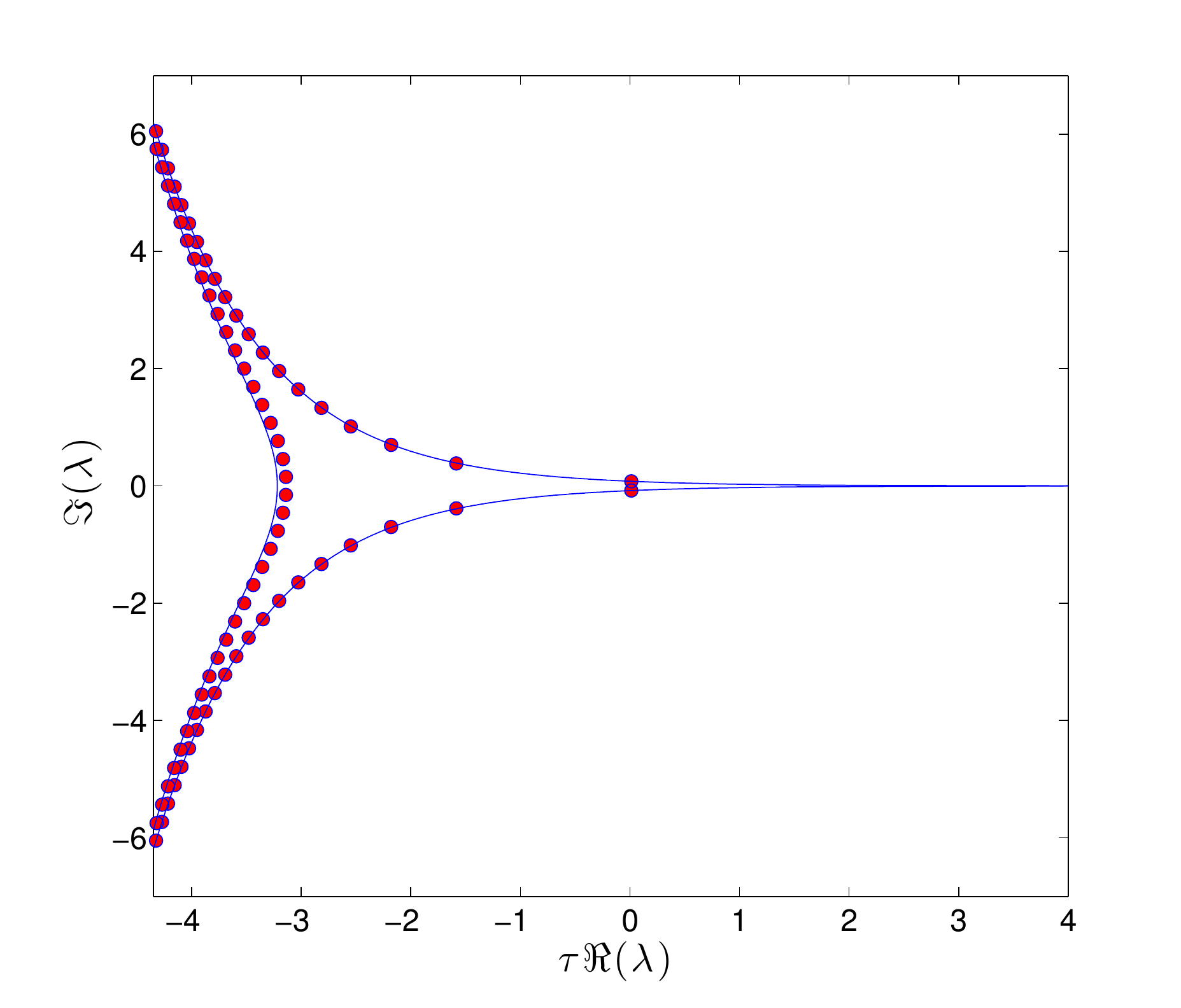}
\caption{{The asymptotic continuous spectrum (blue lines) and the pseudo-continuous
spectrum (red dots) for the periodic solution of Stuart-Landau system
given by Eqs. \eqref{eq_assimp_periodic} and \eqref{eq_transcedental_SL_periodic}
respectively. The parameters are $\sigma=-0.08$ (with $\mu=\rho_{L}=2$,
the spectral radius of the Laplacian matrix of the network in Fig.
\ref{fig:grafofc}, and $\kappa=0.04$), $\alpha=1$, $\beta=\pi$
and $\tau=20$.}}
\label{fig_pseudo_and_assympt_spectrum_SL_periodic_tau20} 
\end{figure}

From the data of Fig. \ref{fig_pseudo_and_assympt_spectrum_SL_periodic_tau20}
we notice that for $\alpha=1$, $\beta=\pi$ and $\tau =20$, stale synchronization is attained
for $0<\kappa\lesssim 0.04$.

Now, in order to have a complete view of the stability domain, we produce
a synchronization map in the parameter space $\sigma\times \tau$.
The Fig. \ref{fig_colormap_maxlamb} shows such a color
map presenting the values of $\max_m\Re(\lambda_m)<0$, where $\lambda_m$, $m\in\mathbb{Z}$, is a
solution of Eq. \ref{eq_transcedental_SL_periodic}, in the $\sigma\times \tau$
parameter space. Note that $\sigma=-\kappa\rho_{L}$ and, as predicted
by Theorem \ref{cor_sync_periodc_orbits}, the stable synchronization
either occurs for $0<\kappa<\kappa_{c}$ or for $-\kappa_{c}<\kappa_{c}<0$
with $\kappa_{c}$ shrinking as $\tau$ grows.

\begin{figure}
\centering \includegraphics[scale=0.25]{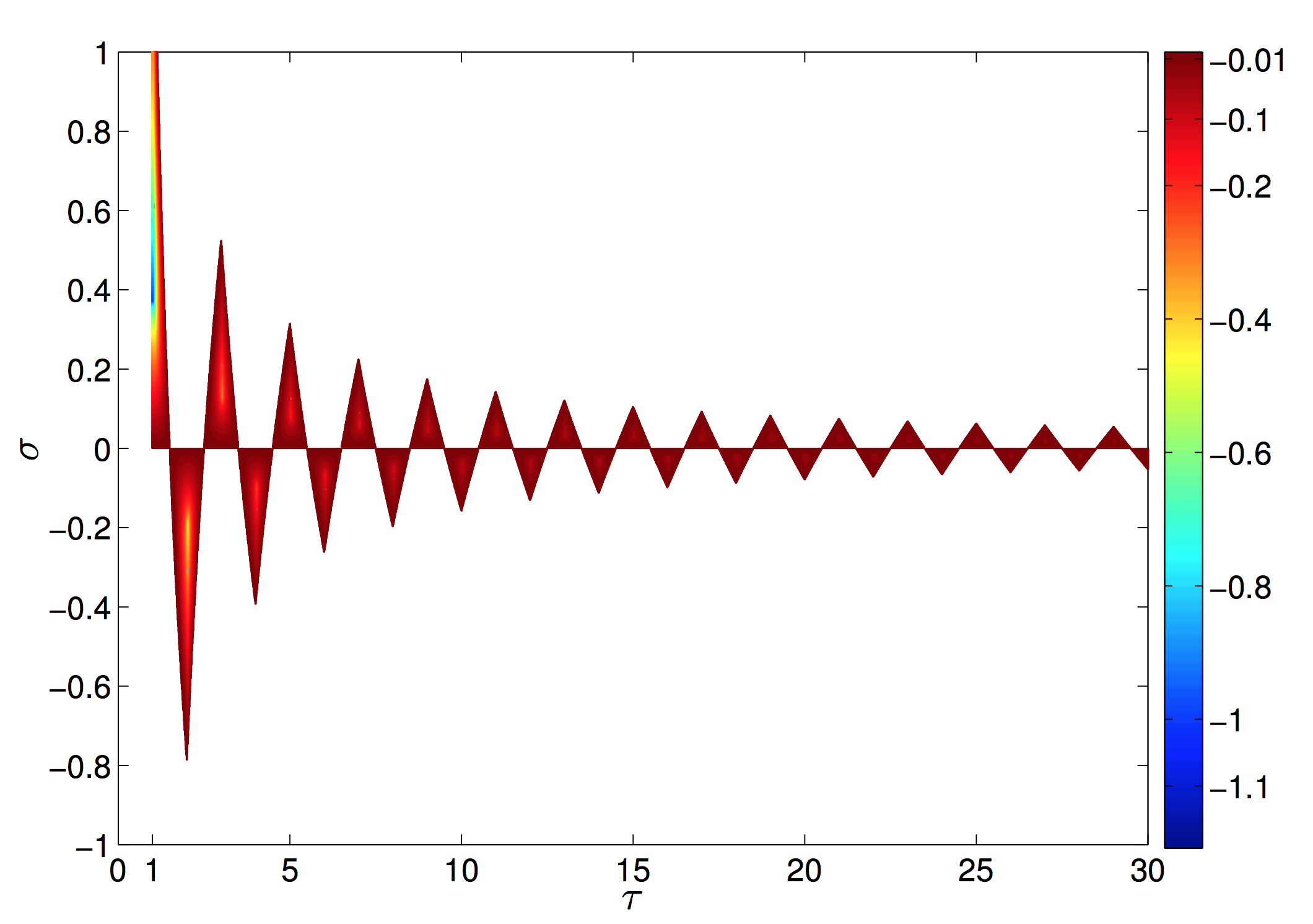}
\caption{Synchronization map in the $\sigma\times\tau$ parameter space. The color
scale represents $\Re(\lambda)<0$ in which $\lambda$ is a solution
of \eqref{eq_transcedental_SL_periodic} with maximal real part. The
white color stands for the instability region ($\Re(\lambda)>0$).
The parameters used were $\alpha=1$ and $\beta=\pi$.}
\label{fig_colormap_maxlamb} 
\end{figure}

The parameter values $-1\leq\sigma\leq1$ of the color map in Fig.~\ref{fig_colormap_maxlamb}
can be related to more complicated connected network for coupling
parameter values in the range $-1/\rho_{L}\leq\kappa<1/\rho_{L}$.
The interchange between stability domains occurs at the values
$\tau=(\pi+2M\pi)/(2\beta)=(2M+1)/2$, $M\in\mathbb{N}$, where we
considered $\beta=\pi$. For such values of time delays no stable
synchronization is attained.

\subsection{Characteristic time}

\label{sec_transient}

In this section, we discuss the transient time such that the trajectories
enter a small vicinity of the synchronization manifold. In particular,
we are interested in the scaling of the transient time with coupling
strength $\kappa$ and time delay $\tau$.

The real parts of the eigenvalues of the linearized system can be
estimated rigorously, so the characteristic time should be related
to the properties of the linearized system. The solutions $\xi(t)$
of Eq. \eqref{eq_xi}, in the case of exponential stability, decay
as 
\begin{equation}
\|\xi(t)\|\le Ce^{-\eta t}\label{eq_xi_decaying}
\end{equation}
with $\eta>0$ to be the largest real part of the eigenvalues. Hence,
we define the \textit{characteristic time} $t_{\text{tr}}$ as 
\[
t_{\text{tr}}=1/\eta.
\]
The characteristic time measures how fast the slowest solution $\xi(t)$
approaches $0$. One can write $\eta=-\gamma_{\max}/\tau$ where $\gamma_{\max}$
is the maximum of the real part of the asymptotic continuous spectrum
given in Eq. \eqref{eq_gamma_assimpt_spectrum}. It can be computed
as 
\begin{align*}
\gamma_{\max}=-\min_{l}\inf_{\omega}\ln\vert g_{l}(\omega)\vert+\max_{j}\ln\kappa\vert\mu_{j}\vert=\ln\frac{\kappa\rho_{L}}{r_{0}}=\ln\frac{\kappa}{\kappa_{c}}.
\end{align*}
Hence, the characteristic time is 
\begin{equation}
t_{\text{tr}}(\kappa)=-\tau\ln^{-1}\frac{\kappa}{\kappa_{c}}.\label{eq_transient}
\end{equation}
Clearly, $t_{\text{tr}}\to\infty$ as $\kappa\to\kappa_{c}.$

Fig. \ref{fig:Transient-time} shows a test for characteristic time
given by Eq. \eqref{eq_transient} using two coupled Stuart-Landau
oscillators with parameters $\alpha=-1$, $\beta=\pi$, and $\tau=20$.
In this example, Eq. \eqref{eq_transient} reads $t_{\text{tr}}(\kappa)=-20\ln^{-1}(2\kappa)$.

\begin{figure}[ht]
\centering \includegraphics[scale=0.4]{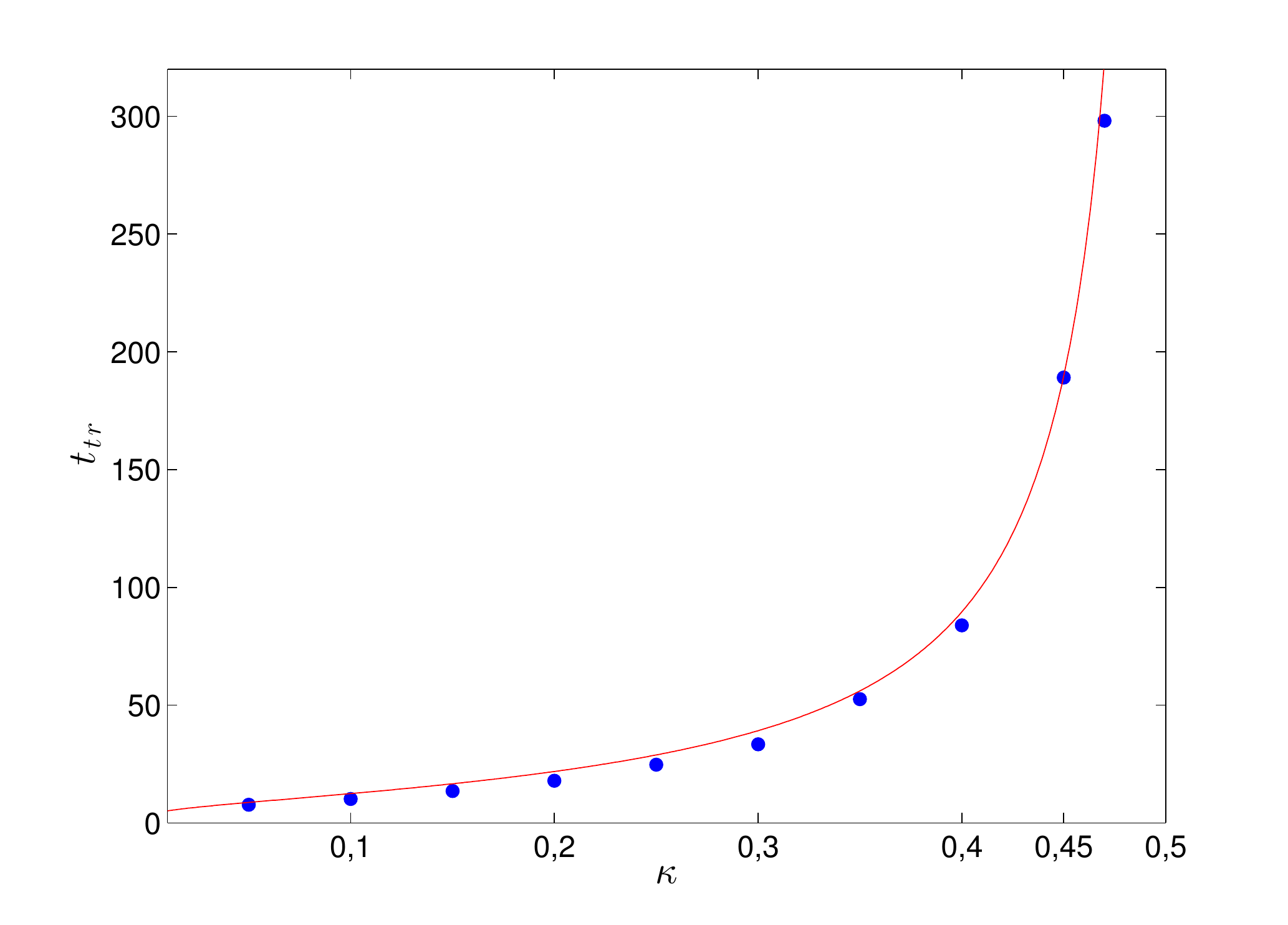}
\caption{Characteristic time for the synchronization of two Stuart-Landau coupled
oscillators. The red curve is $t_{\text{tr}}(\kappa)=20\ln^{-1}(2\kappa)$.
The blue dots were obtained by fixing $\kappa$ and computing $\eta$,
which stands for the angular coefficient of Eq. \eqref{eq_xi_decaying}
in log scale in which $||\xi(t)||=||x_{1}(t)-x_{2}(t)||$, and then
taking $t_{\text{tr}}=1/\eta$. The parameters used were $\alpha=-1$,
$\beta=\pi$ and $\tau=20$. The history functions were taken as constant
and non-zero. \label{fig:Transient-time} }
\end{figure}

\section{Synchronization loss versus network structure}

\label{sec_sync_loss}

Here, we explore the relationship between growing networks (with strongly
delayed connection) and its synchronization window in the case of
equilibrium. The main results are the corollaries \ref{theorem_BA}
and \ref{theorem:ER}.

We will concentrate on the behavior of large networks such as Barab\'asi-Albert
scale-free network, Erd\"os-R\'eniy random network, and some regular graphs,
for which the expressions for the Laplacian spectral radius $\rho_{L}$
are known.

First, let us take a look at how regular graphs respond to the synchronization
(using the network model \eqref{eq:net_mod_lap} with long delay)
in the limit of large network size. Table \ref{tab:laplacian} lists
spectral radius of the Laplacian matrix of the main regular graphs:
complete, ring, star, and path.

\begin{table}[ht!]
\centering{}%
\begin{tabular}{|c|c|c|}
\hline 
Graph  & $\rho_{L}$  & Synchronization window\tabularnewline
\hline 
\hline 
Complete  & $n$  & $(0,r_{0}/n)$\tabularnewline
\hline 
Ring  & $\begin{array}{c}
4\mbox{ if \ensuremath{n} is even}\\
2+2\cos\left({2\pi}/{n}\right)\mbox{ if \ensuremath{n} is odd}
\end{array}$  & $(0,r_{0}/4)$ or $\left(0,r_{0}/(2+2\cos\left({2\pi}/{n}\right))\right)$\tabularnewline
\hline 
Star  & $n$  & $(0,r_{0}/n)$\tabularnewline
\hline 
Path  & $2+2\cos\left({\pi}/{n}\right)$  & $\left(0,r_{0}/\left(2+2\cos\left({\pi}/{n}\right)\right)\right)$\tabularnewline
\hline 
\end{tabular}\caption{Laplacian spectral radius $\rho_{L}$ and synchronization window for
the coupling parameter $\kappa$ (for strong delay) of some regular
graphs. }
\label{tab:laplacian} 
\end{table}

Using Theorem \ref{theo:fp}, and Table \ref{tab:laplacian} we observe
that: 
\begin{itemize}
\item[–] For strong delay and a large network size $n\to\infty$, the synchronization
manifold tends to be always unstable in networks of the types \emph{Complete}
or \emph{Star} provided that the coupling strength $\kappa$ does
not scale with $n$. 
\item[–] For strong delay and a large network size $n\to\infty$, the synchronization
manifold tends to be stable for a certain interval of the coupling
strength $\kappa\in(0,r/4)$ in simple networks of the types \emph{Ring}
or \emph{Path}. 
\end{itemize}
Now, let us consider some complex networks. We relate the synchronization
condition to the graph structure for two important examples of complex
networks.

\emph{Homogeneous networks}, characterized by a small disparity in
the node degrees. A canonical example is the Erd\"os-R\'eniy (ER) random
network: Starting with $n$ nodes the graph is constructed by connecting
nodes randomly. Each edge is included in the graph with probability
$0<p<1$ independent from every other edge. If $p=p_{0}\ln n/n$ with
$p_{0}>1$ then the ER network is almost surely connected \cite{bollobas2001random}.
We will consider that $p$ is chosen so that the ER network is connected.

\emph{Heterogeneous networks}, where some nodes (called hubs) are
highly connected whereas most of the nodes have only a few connections.
A canonical example of such networks is the Barab\'asi-Albert (BA) scale-free
network. One way to construct it is to start with two nodes and a
single edge that connect them. Then at each step, a new node is created
and it is connected with one of the preexisting nodes with a probability
proportional to its degree. This process is called preferential attachment.

More details about the construction, structure, and dynamics of such
networks can be found in \cite{newman2003,Boccaletti2006}. Illustrations
of Erd\"os-R\'eniy (ER) random networks (homogeneous) and Barab\'asi-Albert
(BA) Scale-Free networks (heterogeneous) can be seen in Figure \ref{fig:BA}.

\begin{figure}[!ht]
\centering
\noindent\makebox[1\textwidth]{
\includegraphics[scale=0.35]{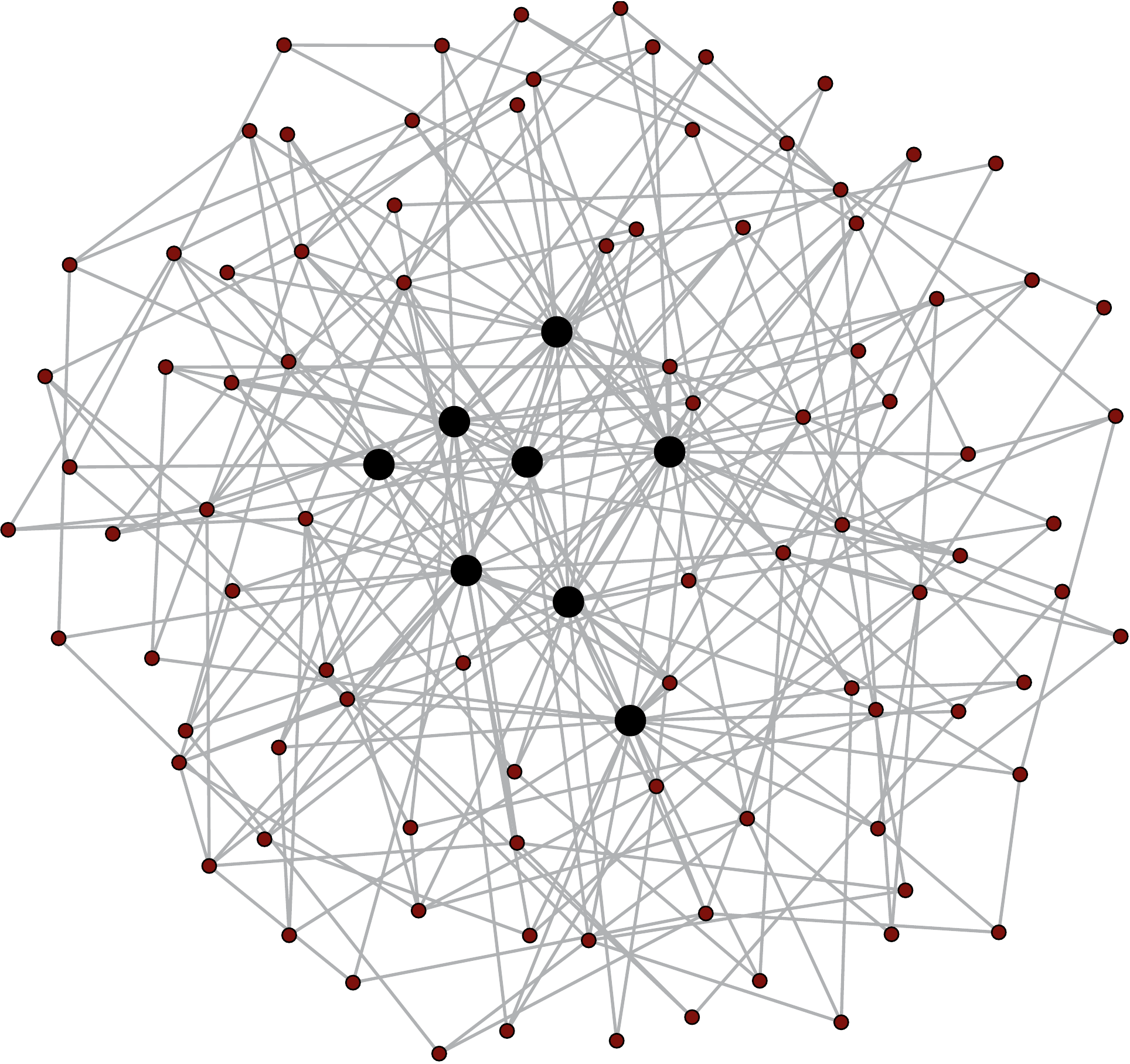} $\qquad$
\includegraphics[scale=0.35]{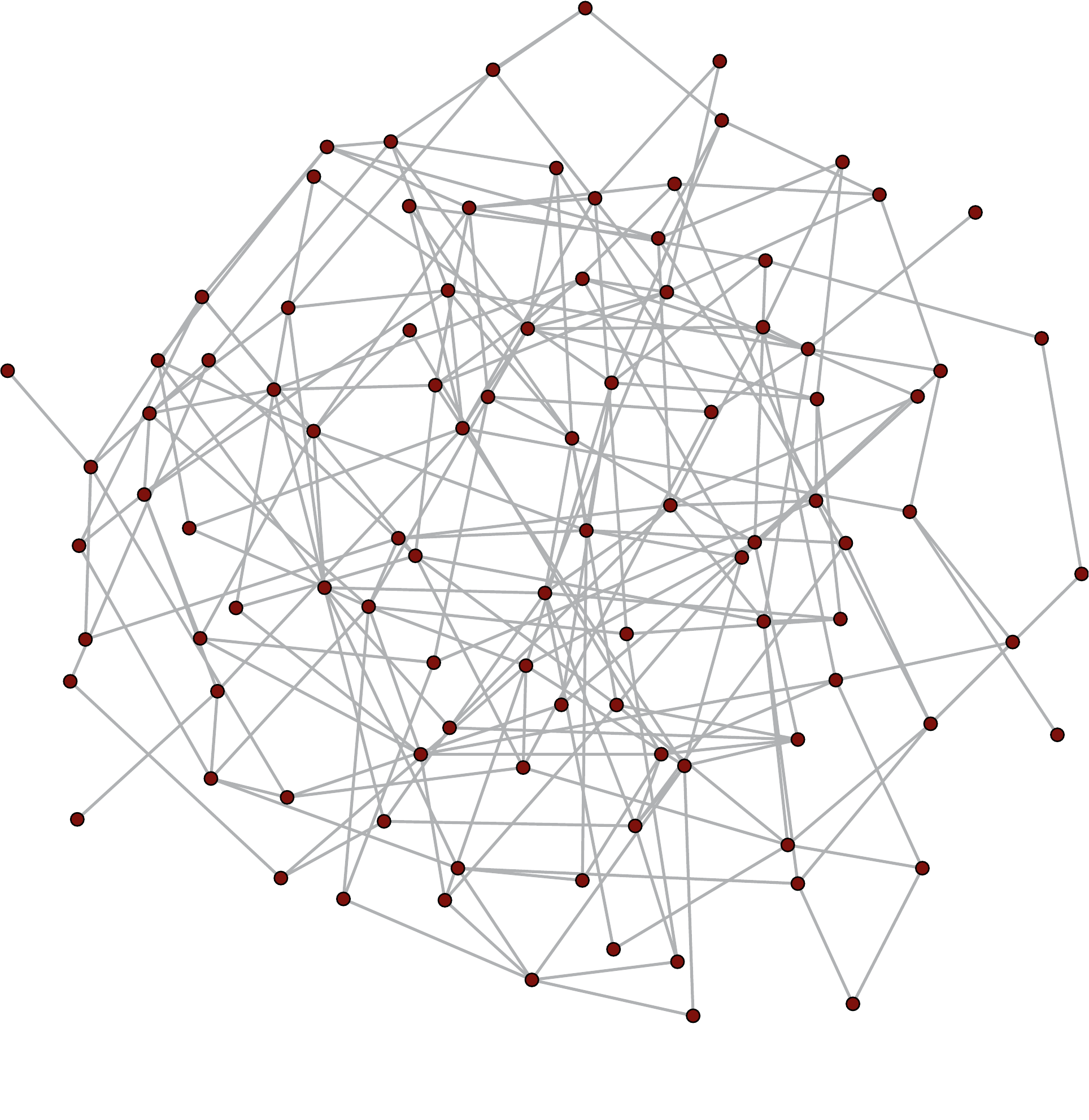}
}
\caption{Illustrations of a BA network (left) and an ER network (right), both
with $n=100$. Some hubs in the BA network are highlighted with black
color and bigger size.}
\label{fig:BA} 
\end{figure}

The response of the BA and ER networks to synchronization under the
considered delayed model already stated in Sec. \ref{sec_main_results}
and encoded in corollaries \ref{theorem_BA} and \ref{theorem:ER}.
The both results says that ER and BA networks tend to not have stable
synchronization in the limit of large $n$. But, ER networks, losses
stable synchronization at a slow rate compared to BA networks.

Here we prove the both cited corollaries. The proof of Corollary \ref{theorem_BA}
is based on the following well-known results. 

\begin{lemma}[Ref. \cite{Mohar2004}] 
\label{lemma:rhoL_and_gmax} 
Let $g_{\max}$
denote the largest degree of a undirected network $G$ of size $n$.
Then the spectral radius $\rho_{L}$ of the Laplacian matrix $L$
of $G$ has the following estimates: 
\begin{equation}
\frac{n}{n-1}g_{\max}\leq\rho_{L}\leq2g_{\max}.\label{eq_bounds_for_rhoL}
\end{equation}
\end{lemma}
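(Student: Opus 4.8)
The plan is to treat the two inequalities separately, since each rests on a standard but distinct idea from spectral graph theory. Throughout I use that $L=D-A$ is symmetric and positive semidefinite, because its quadratic form is $x^{T}Lx=\sum_{\{i,j\}\in E}(x_i-x_j)^2\ge 0$; consequently all eigenvalues are nonnegative, $\rho_L$ equals the largest eigenvalue $\mu_{\max}$, and $\mu_{\max}$ is accessible through the Rayleigh quotient $\mu_{\max}=\max_{x\ne 0}\,x^{T}Lx/x^{T}x$.

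For the upper bound $\rho_L\le 2g_{\max}$ I would invoke Gershgorin's circle theorem. The diagonal entries of $L$ are the degrees, $L_{ii}=g_i$, while the off-diagonal entries are $L_{ij}=-A_{ij}\in\{0,-1\}$, so the off-diagonal absolute row sum is $\sum_{j\ne i}|L_{ij}|=g_i$. Hence every eigenvalue $\mu$ lies in a disc $|\mu-g_i|\le g_i$, which forces $\mu\le 2g_i\le 2g_{\max}$; applying this to $\mu_{\max}$ yields the claim. Equivalently, one may observe that the signless Laplacian $Q=D+A$ is positive semidefinite, since $x^{T}Qx=\sum_{\{i,j\}\in E}(x_i+x_j)^2\ge 0$, whence $2D-L=D+A\succeq 0$ and therefore $\mu_{\max}(L)\le\mu_{\max}(2D)=2g_{\max}$.

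For the lower bound $\rho_L\ge \frac{n}{n-1}g_{\max}$ the crux is the right choice of test vector. Let $v$ be a vertex of maximal degree $g_{\max}$ with indicator vector $e_v$; the naive choice $x=e_v$ only gives $\mu_{\max}\ge L_{vv}=g_{\max}$. To recover the factor $n/(n-1)$, I would project $e_v$ onto the orthogonal complement of the kernel of $L$: since $L\mathbf{1}=0$ for the all-ones vector $\mathbf{1}=(1,\dots,1)^{T}$, set $x=e_v-\tfrac{1}{n}\mathbf{1}$. Subtracting a multiple of $\mathbf{1}$ does not change the quadratic form, so $x^{T}Lx=e_v^{T}Le_v=g_{\max}$, while a direct computation gives $x^{T}x=1-\tfrac1n=\tfrac{n-1}{n}$. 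The Rayleigh quotient then equals $\tfrac{n}{n-1}g_{\max}$, and $\mu_{\max}\ge x^{T}Lx/x^{T}x$ closes the argument.

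I do not expect a genuine obstacle here, as both halves reduce to elementary linear algebra; the only real content is the insight of projecting out the kernel direction in the lower bound, after which everything is bookkeeping — confirming that the Gershgorin radii equal the degrees and verifying $\|e_v-\tfrac1n\mathbf{1}\|^2=(n-1)/n$. The single point deserving care is that the lower-bound construction tacitly uses $g_{\max}\le n-1$, so that $x\ne 0$ and the resulting bound stays consistent with $\rho_L\le n$; this holds automatically for a simple graph on $n$ vertices.
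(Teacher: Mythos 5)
Your proposal is correct, but note that the paper itself offers no proof of this lemma at all: it is stated as a quoted result from the cited reference (Mohar), so the comparison here is with the literature rather than with an in-paper argument. Your self-contained proof is the standard one and both halves check out: for the upper bound, Gershgorin applied to $L$ (diagonal entries $g_i$, off-diagonal absolute row sums $g_i$) gives $\mu \le 2g_i \le 2g_{\max}$ for every eigenvalue, and your alternative via $2D-L=D+A\succeq 0$ is equally valid; for the lower bound, the test vector $x=e_v-\tfrac1n\mathbf{1}$ indeed satisfies $x^{T}Lx=e_v^{T}Le_v=g_{\max}$ (since $L\mathbf{1}=\mathbf{1}^{T}L=0$) and $x^{T}x=(n-1)/n$, so the Rayleigh quotient yields exactly $\tfrac{n}{n-1}g_{\max}\le\rho_L$. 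What your argument buys over the paper's bare citation is a fully elementary, two-line-per-bound proof using only Courant--Fischer and Gershgorin, which makes the lemma self-contained. One small inaccuracy in your closing remark: the nonvanishing of $x$ does not rest on $g_{\max}\le n-1$; the $v$-th entry of $x$ is $(n-1)/n$, so $x\neq 0$ for every $n\ge 2$ regardless of the degree sequence, and for $n=1$ the statement is vacuous since $L=0$ and $g_{\max}=0$. This slip does not affect the validity of the proof.
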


\begin{lemma}[Ref. \cite{Mori}] 
\label{lemma_BA} 
Consider a BA
undirected network of size $n$ and $g_{\max}$ its largest degree.
With probability $1$ we have 
\begin{equation}
\lim_{n\to\infty}n^{-1/2}g_{\max}=\mu;\label{eq_aux2_cor2}
\end{equation}
the limit is almost surely positive and finite.
\end{lemma}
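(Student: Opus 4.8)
The plan is to reduce the statement to a single preferential-attachment tree---since each new node creates exactly one edge, this is the Barab\'asi--Albert tree ($m=1$)---and to control the degree of each fixed vertex by a martingale before passing to the maximum. Write $d_i(k)$ for the degree of the $i$-th vertex once the tree has $k$ vertices. A $k$-vertex tree has total degree $2(k-1)$, so the preferential-attachment rule gives
\[
\mathbb{E}\bigl[d_i(k+1)\mid\mathcal{F}_k\bigr]=d_i(k)\Bigl(1+\frac{1}{2(k-1)}\Bigr)=d_i(k)\,\frac{2k-1}{2(k-1)}.
\]
First I would introduce the deterministic normalizer $c_k=\prod_{j=2}^{k-1}\frac{2j-2}{2j-1}=\Gamma(\tfrac32)\,\Gamma(k-1)/\Gamma(k-\tfrac12)$, chosen precisely so that $M_i(k):=c_k\,d_i(k)$ is a nonnegative martingale for each fixed $i$.

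By the martingale convergence theorem $M_i(k)\to\xi_i$ almost surely (and in $L^2$) for an integrable random variable $\xi_i$. Stirling's formula gives $c_k\sim C\,k^{-1/2}$ with $C=\tfrac{\sqrt\pi}{2}$, so this normalization is exactly the $\sqrt k$ scaling of the statement: $d_i(k)/\sqrt{k}\to\xi_i/C$ almost surely. One checks that $\xi_i>0$ a.s. (the gain probability at step $k$ exceeds $1/(2(k-1))$, so a conditional Borel--Cantelli argument forces $d_i(k)\to\infty$, and the limit law of $\xi_i$ carries no atom at $0$) and, inspecting the seed vertices, that $\mu:=\sup_i\xi_i/C$ is strictly positive. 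Writing $g_{\max}(n)=\max_i d_i(n)$, the lower bound is then immediate: $g_{\max}(n)/\sqrt{n}\ge d_i(n)/\sqrt n\to\xi_i/C$ for every $i$, whence $\liminf_n g_{\max}(n)/\sqrt n\ge\mu$.

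The crux---and the step I expect to be the main obstacle---is the matching upper bound, namely that no late-arriving vertex can overtake the early hubs in the limit, so that $\limsup_n g_{\max}(n)/\sqrt n\le\mu$ and the maximum genuinely converges. The naive route fails: Doob's $L^2$ maximal inequality yields $\mathbb{P}(\sup_k M_i(k)>\lambda)\lesssim\mathbb{E}[\xi_i^2]/\lambda^2$, but since $\mathbb{E}[d_i(n)^2]$ is of order $n/i$ the quantity $\mathbb{E}[\xi_i^2]$ decays only like $1/i$, so a union bound over the vertices produces the divergent sum $\sum_i 1/i$; termwise estimates therefore cannot yield tightness of the family $\{M_i(\cdot)\}_i$. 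The resolution is to treat the maximum directly: $c_n\,g_{\max}(n)=\max_i M_i(n)$ is a submartingale, and one must prove it is bounded in $L^1$ by separating the finitely many dominant early vertices from the collective tail of the remaining ones, controlling the latter through the $1/i$ decay of $\mathbb{E}[\xi_i^2]$ rather than through $\sum_i d_i(n)^2$, whose bulk blows up logarithmically. Once $\max_i M_i(n)$ is shown to be an $L^1$-bounded submartingale it converges almost surely, its limit can be identified with $\sup_i\xi_i=C\mu$, and dividing by $c_n\sqrt n\to C$ gives $n^{-1/2}g_{\max}\to\mu$ almost surely with $\mu$ positive and finite, as claimed. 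This is precisely the line carried out in \cite{Mori}, from which the lemma is quoted.
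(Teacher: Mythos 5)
First, a point of comparison: the paper does not prove this lemma at all --- it is imported verbatim, via the citation \cite{Mori}, from M\'ori's work on the maximum degree of the Barab\'asi--Albert random tree. So your sketch can only be measured against M\'ori's own martingale argument, which is indeed what you are trying to reconstruct. Your setup is correct and matches his: with the paper's construction each new node adds exactly one edge, the total degree at time $k$ is $2(k-1)$, the normalizer $c_k=\Gamma(\tfrac32)\Gamma(k-1)/\Gamma(k-\tfrac12)$ makes $M_i(k)=c_k d_i(k)$ a nonnegative martingale, $c_k\sim\tfrac{\sqrt{\pi}}{2}k^{-1/2}$, each $M_i$ converges a.s., $\max_{i\le n}M_i(n)$ is a submartingale, and you have correctly isolated the crux: passing from convergence of each $M_i$ to convergence of the maximum.

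However, your proposed resolution of that crux is where there is a genuine gap. You suggest bounding the submartingale $\max_i M_i(n)$ in $L^1$ by ``separating the finitely many dominant early vertices from the collective tail'' and controlling the tail via the $1/i$ decay of $\mathbb{E}[\xi_i^2]$. Second moments cannot be rescued this way: for every fixed $K$ one still has $\sum_{i>K}\mathbb{E}[M_i(n)^2]\asymp\sum_{i>K}1/i=\infty$, so the logarithmic divergence you identified for the union bound reappears, unchanged, for the tail. The step that actually closes the argument in \cite{Mori} is to use moments of order $p>2$: factorial-moment (Gamma-ratio) computations give $\mathbb{E}[M_i(n)^p]\le C_p\, i^{-p/2}$ uniformly in $n$, and for $p>2$ the series $\sum_i i^{-p/2}$ converges; hence $\mathbb{E}\bigl[(\max_i M_i(n))^p\bigr]\le\sum_i\mathbb{E}[M_i(n)^p]$ is bounded uniformly in $n$, the submartingale converges a.s., and Doob's $L^p$ maximal inequality applied to the tail $i>m$, whose contribution is $O\bigl((\sum_{i>m}i^{-p/2})^{1/p}\bigr)\to 0$ as $m\to\infty$, identifies the limit with $\sup_i\xi_i$ and shows it is finite. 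Separately, your positivity claim is unsupported: the Borel--Cantelli observation that $d_i(k)\to\infty$ says nothing about $\xi_i>0$ (the degree could grow like $k^{1/3}$, giving $\xi_i=0$); almost-sure positivity requires identifying the law of the limit (e.g.\ via the P\'olya-urn/Beta representation of $d_1(n)$, as M\'ori does) and checking it has no atom at $0$.
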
 

\begin{proof}[Proof of Corollary \ref{theorem_BA}] 
The Lemma
\ref{lemma_BA} implies that the maximum degree of a BA network grows
as $\sqrt{n}$ when we let $n\to\infty$. Therefore, using Eq. \eqref{eq_bounds_for_rhoL}
and \eqref{eq_aux2_cor2} we see that $\rho_{L}\geq\mu\sqrt{n}$ with
probability 1 in the limit of large $n$ which implies that the synchronization
window, that is, the interval $(0,r/\rho_{L})$ (see Theorem \ref{theo:fp})
shrinks and vanishes with large $n$. 
\end{proof} 

\begin{remark}
Corollary \ref{theorem_BA} shows that the synchronization manifold
tends to be unstable in large and strongly delayed BA (heterogeneous)
networks if the coupling strength is not scaled with $n$ or $\tau$.
\end{remark} 
For the proof of Corollary \ref{theorem:ER}, we use Lemma
\ref{lemma:rhoL_and_gmax} and the following result. 

\begin{lemma}[Ref. \cite{Riordan2000}]
\label{lemma:Bollobas_ER} 
Consider an Erd\"os-R\'eniy
network with $n$ nodes, connection probability $0<p<1$ and $q=1-p$.
Then, the probability that the maximum degree $g_{\max}$ being at
most $np+b\sqrt{npq}$, for some constant $b>0$, tends to $1$ as
$n\to\infty$. 
\end{lemma}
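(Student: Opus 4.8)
The plan is to control the maximum degree by a union bound over the $n$ vertices, which reduces the statement to a one–sided large–deviation estimate for a single vertex degree. First I would note that the degree of vertex $i$ is $d_i=\sum_{l\neq i}A_{il}$, a sum of $n-1$ independent Bernoulli$(p)$ random variables, so $d_i\sim\mathrm{Binomial}(n-1,p)$ with mean $(n-1)p$ close to $np$ and variance $(n-1)pq$ close to $npq$. Since $g_{\max}=\max_i d_i$, the event $\{g_{\max}>np+b\sqrt{npq}\}$ is the union of the $n$ events $\{d_i>np+b\sqrt{npq}\}$, and — crucially — the union bound does not require the degrees to be independent (they are not, as they share edges), so
\[
\Pr\!\left(g_{\max}>np+b\sqrt{npq}\right)\le n\,\Pr\!\left(d_1>np+b\sqrt{npq}\right).
\]
It then suffices to bound the upper tail of a single binomial and to show that the right–hand side vanishes as $n\to\infty$.

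For the tail estimate I would invoke a Chernoff/Bernstein inequality. Writing the deviation as $t=b\sqrt{npq}$, a multiple of the standard deviation $\sigma=\sqrt{npq}$, the exponential moment bound for a sum of independent variables taking values in $[0,1]$ yields an estimate of the form
\[
\Pr\!\left(d_1-(n-1)p\ge t\right)\le\exp\!\left(-\frac{t^{2}}{2\bigl(\sigma^{2}+t/3\bigr)}\right).
\]
Substituting $t=b\sigma$ turns the exponent into $-c\,b^{2}$ for an absolute constant $c>0$ once $n$ is large, whence the union bound gives $\Pr(g_{\max}>np+b\sqrt{npq})\le n\exp(-c\,b^{2})$. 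The remaining task is to calibrate the deviation scale so that this product tends to zero.

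The hard part is exactly this calibration between the prefactor $n$ and the exponential tail decay. A deviation fixed at a constant number $b$ of standard deviations gives a tail of order $e^{-cb^{2}}$, so forcing $n\,\Pr(\cdot)\to0$ requires the deviation to be taken at the threshold where the exponent $c\,t^{2}/\sigma^{2}$ overtakes $\log n$; this is precisely the point at which the sharp asymptotics of Riordan–Selby pin down the true scale $np+\Theta(\sqrt{npq\,\log n})$ of the maximum degree. I would therefore run the argument with the deviation taken at (a constant multiple of) this threshold, verify that $n\exp(-c\,t^{2}/\sigma^{2})\to0$, and record that for the downstream application only the order bound $g_{\max}=O\!\bigl(np+\sqrt{npq\,\log n}\bigr)$ is needed. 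Feeding this through Lemma \ref{lemma:rhoL_and_gmax} then yields $\rho_L=O(\log n)$ in the connectivity regime $p=p_0\ln n/n$, which is what Corollary \ref{theorem:ER} ultimately uses.
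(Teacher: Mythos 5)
A point of reference first: the paper never proves this lemma --- it is imported verbatim from Ref.~\cite{Riordan2000} --- so your proposal stands on its own rather than against an in-paper argument. Your route (union bound over the $n$ vertices plus a Chernoff--Bernstein tail for a single $\mathrm{Binomial}(n-1,p)$ degree) is the standard one, and your diagnosis of the calibration problem is not a defect of your proof but of the statement itself: for fixed $b$ the product $n\exp(-cb^{2})$ cannot vanish, and indeed no proof exists, because the maximum degree genuinely sits $\Theta(\sqrt{\log n})$ standard deviations above the mean. For constant $p$ one has $g_{\max}=np+(1+o(1))\sqrt{2npq\log n}$ with high probability, and at the connectivity threshold $p=p_{0}\ln n/n$ the maximum degree is $c(p_{0})\ln n$ with $c(p_{0})>p_{0}$ strictly (Poisson-type large deviations), so in either regime $\Pr\left(g_{\max}\le np+b\sqrt{npq}\right)\to 0$, not $1$, for any fixed constant $b$. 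Your repaired statement --- $g_{\max}\le np+C\sqrt{npq\log n}$ with high probability for a suitable constant $C$ --- is the correct one; it is provable exactly by your argument, and it is all the downstream application needs, since it still gives $g_{\max}=O(\ln n)$ at $p=p_{0}\ln n/n$, hence $\rho_{L}=O(\ln n)$ via Lemma \ref{lemma:rhoL_and_gmax}, which is what Corollary \ref{theorem:ER} actually uses.

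One step of your plan needs care in the regime the corollary lives in. There $\sigma^{2}=npq=\Theta(\log n)$ while $t=C\sqrt{npq\log n}=\Theta(\log n)$, so $t$ is comparable to or larger than $\sigma^{2}$, and the Bernstein exponent is $-t^{2}/\left(2(\sigma^{2}+t/3)\right)=-\Theta(t)=-\Theta(\log n)$ rather than $-\Theta(t^{2}/\sigma^{2})$; the union bound still closes because the constant in front of $\log n$ grows with $C$, but the shorthand $n\exp(-c\,t^{2}/\sigma^{2})$ you propose to ``verify'' is only legitimate when $t=O(\sigma^{2})$. With that adjustment your self-contained argument has a genuine advantage over the citation: it covers $p\to 0$, a regime outside the constant-$p$ setting of the sharp asymptotics you invoke, and precisely the regime in which the paper applies the lemma.
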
 

\begin{proof}[Proof of Corollary \ref{theorem:ER}]
For large $n$ and using the probability $p=p_{0}\ln n/n$ with $p_{0}>1$
one get 
\[
g_{\max}\sim p_{0}\ln n+O\left(\sqrt{\ln n}\right).
\]
So, using Lemma \ref{lemma:rhoL_and_gmax}, we end up with 
\begin{align*}
\rho_{L} & \geq\left(\frac{n}{n-1}\right)\left[p_{0}\ln n+O\left(\sqrt{\ln{n}}\right)\right]
\end{align*}
which means that $\rho_{L}\to\infty$ when $n\to\infty$ with a rate
of order $\ln n$. The spectral radius $\rho_{L}$ is the lowest possible
when $p_{0}\to1^{+}$ maintaining the network connected, or, $\rho_{L}$
is the lowest possible when the network crosses the connectivity threshold
becoming connected. 
\end{proof}

Although Corollary \ref{theorem:ER} says that the ER random network
does not allow the synchronization manifold to be stable in the limit
of $n\to\infty$, the rate in which it happens is slow, making it
possible to synchronize very large ER networks with long time delays.

The properties observed for the stability of the synchronization manifold
for BA and ER networks with strong delay, that is, large BA networks
doesn't support strong delay interaction and relatively large ER networks
supports strong delay interaction, are similar to the persistence
of the synchronization when the non-delayed coupling functions are
nonidentical \cite{Maia2016}.

\subsection{Synchronization of BA and ER networks when coupling strength is scaled with $n$}

\label{sec_scaling}

As we have seen from the previous subsection, not only simple but
also complex networks tend to have an always unstable synchronization
manifold with strong delay $\tau$ and large network size $n$.

In many network dynamical models, it is common to normalize the coupling
parameter, in our case $\kappa$, in order to preserve certain behaviors
and properties of the network, such as synchronization, mean field
oscillation, community clustering, etc \cite{Arenas200893,Rodrigues20161,Fortunato201075}.
When the network size is dynamical, for instance, the size of the
network is growing with time, then this normalization becomes even
more important.

With this notion in mind, we can see that in the regime of large network
size, the coupling parameter $\kappa$ should have some natural scaling
depending on the network structure. If those scaling are taken into
account in the network model then the stability of the synchronization
manifold can always be preserved for $n\to\infty$.

For example, if we know beforehand that we have a large simple network
of the type Complete (or Star) then the natural scaling of the coupling
parameter would be 
\[
\kappa\to\frac{\kappa}{n}.
\]

We then state here further consequences of the corollaries \ref{theorem_BA}
and \ref{theorem:ER} when considering the scaling of the coupling
parameter. 
\begin{corollary} 
\label{cor_BA_rescaling} 
Consider a BA scale-free
network with number of nodes equals to $n$ and the network
model \eqref{eq:net_mod_lap} with the coupling parameter 
\[
\kappa=\frac{\kappa_{s}}{\sqrt{n}}.
\]
Then for any large enough size $n$ and long enough delay, there is
always a non-empty interval $I=(0,\kappa_{\max}(n))\subset\mathbb{R}$
such that any synchronous steady state is locally exponentially
stable for all $\kappa_{s}\in I$ and unstable if $\kappa_{s}\in\mathbb{R}\setminus\overline{I}$
(with probability 1). Moreover, the length of this interval converges
to a nonzero constant with $n\to\infty$ with probability 1: 
\[
\lim_{n\to\infty}\kappa_{\max}(n)=\kappa_{\infty},\quad0<\kappa_{\infty}<\infty.
\]

\end{corollary}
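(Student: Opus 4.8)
The plan is to reduce everything to Theorem \ref{theo:fp} under the rescaling $\kappa = \kappa_s/\sqrt{n}$, and then to control the Laplacian spectral radius $\rho_L$ via Lemmas \ref{lemma:rhoL_and_gmax} and \ref{lemma_BA}. Since a BA network is undirected and (by construction) connected, its Laplacian eigenvalues are real and nonnegative, and Theorem \ref{theo:fp} applies under Assumption \ref{assump_f}: for all sufficiently large $\tau$, the synchronous steady state is locally exponentially stable exactly when $|\kappa| < r_0/\rho_L$, where $r_0 = r_0(f,h) > 0$ is independent of the network. Substituting $\kappa = \kappa_s/\sqrt{n}$ converts this into
\[
|\kappa_s| < \frac{r_0\sqrt{n}}{\rho_L} =: \kappa_{\max}(n),
\]
so, setting $I = (0,\kappa_{\max}(n))$, stability for $\kappa_s \in I$ follows since $|\kappa_s| < \kappa_{\max}(n)$ there, while instability for $\kappa_s > \kappa_{\max}(n)$ follows from the instability part of Theorem \ref{theo:fp}. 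This settles the dichotomy for each fixed, large $n$.

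Next I would pin down the size of $\kappa_{\max}(n)$. Lemma \ref{lemma_BA} gives $n^{-1/2} g_{\max} \to \mu$ almost surely with $0 < \mu < \infty$, and Lemma \ref{lemma:rhoL_and_gmax} gives the deterministic sandwich $\tfrac{n}{n-1} g_{\max} \le \rho_L \le 2 g_{\max}$. Dividing by $\sqrt{n}$ and letting $n \to \infty$ shows that, with probability $1$, $\rho_L/\sqrt{n}$ is eventually confined to $[\mu, 2\mu]$ (the factor $n/(n-1)$ tending to $1$). Hence $\rho_L = \Theta(\sqrt{n})$ almost surely, so $\kappa_{\max}(n)$ stays bounded away from both $0$ and $\infty$; this already yields the nontriviality $0 < \liminf_n \kappa_{\max}(n) \le \limsup_n \kappa_{\max}(n) < \infty$ and produces a non-degenerate limiting synchronization interval.

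The hard part will be upgrading this boundedness to genuine convergence $\kappa_{\max}(n) \to \kappa_\infty$, i.e. showing that $\rho_L/\sqrt{n}$ has an almost-sure limit $c \in [\mu,2\mu]$, after which $\kappa_\infty = r_0/c$. The crude sandwich of Lemma \ref{lemma:rhoL_and_gmax} leaves a factor-two gap and is insufficient on its own. To close it I would exploit the hub-dominated structure of BA graphs, where the largest Laplacian eigenvalue is governed by the maximum-degree vertex through sharper bounds such as $g_{\max}+1 \le \rho_L \le \max_{uv\in E}\bigl(d(u)+d(v)\bigr)$, and then show that this upper envelope, divided by $\sqrt{n}$, concentrates almost surely on a deterministic constant $c$. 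Establishing this spectral concentration — controlling the joint behaviour of the top degrees and their mutual adjacencies in the BA model — is the genuine technical obstacle; everything else is a direct transcription of Theorem \ref{theo:fp} under the $1/\sqrt{n}$ scaling.
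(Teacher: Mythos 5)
Your argument follows the same route as the paper's own proof, and the parts you complete are exactly what that proof contains: the paper quotes the dichotomy from Theorem \ref{theo:fp} (stability for $0<\kappa<r/\rho_L$, instability for $\kappa>r/\rho_L$), asserts that ``$\rho_L\sim\sqrt{n}$ (see Lemma \ref{lemma_BA})'', and substitutes $\kappa=\kappa_s/\sqrt{n}$ to conclude $0<\kappa_s<r$. In particular, the paper never goes beyond your first two paragraphs: Lemma \ref{lemma_BA} is a statement about $g_{\max}$, not about $\rho_L$, and the only bridge supplied in the paper is the sandwich $\tfrac{n}{n-1}g_{\max}\le\rho_L\le 2g_{\max}$ of Lemma \ref{lemma:rhoL_and_gmax}, which, as you observe, confines $\rho_L/\sqrt{n}$ asymptotically to $[\mu,2\mu]$ and therefore yields only $0<\liminf_n\kappa_{\max}(n)\le\limsup_n\kappa_{\max}(n)<\infty$, not an actual limit.

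So the ``hard part'' you isolate --- almost-sure convergence of $\rho_L/\sqrt{n}$, without which the displayed limit $\lim_n\kappa_{\max}(n)=\kappa_\infty$ is not established --- is a genuine gap, but it is a gap in the paper's own proof, not a missing ingredient you failed to find: the paper simply writes $r/\rho_L\approx r/\sqrt{n}$, which would even require the limiting constant to be $1$ rather than something in $[\mu,2\mu]$ (note also that the $\mu$ in Lemma \ref{lemma_BA} is itself a random variable, so any limit $\kappa_\infty$ would be random as well). Your sketched repair via sharper spectral bounds such as $g_{\max}+1\le\rho_L$ and concentration of the top-degree structure of BA graphs goes beyond anything in the paper; and if one settles for the weaker conclusion that the synchronization window remains bounded away from $0$ and $\infty$ --- which is all the application really needs --- your proposal proves that rigorously, which is more than the paper's three-line proof does. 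One further point neither you nor the paper addresses: the corollary claims instability on all of $\mathbb{R}\setminus\overline{I}$, but for $\kappa_s\in(-\kappa_{\max}(n),0)$ Theorem \ref{theo:fp} gives \emph{stability} (its stability condition is $|\kappa|<\kappa_c$, sign-independent), so the statement itself is defective for negative couplings; your restriction of the instability argument to $\kappa_s>\kappa_{\max}(n)$ is the correct reading.
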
 

\begin{proof} Any equilibrium 
solution is locally exponentially stable if $0<\kappa<r/\rho_{L}$
and unstable for $\kappa>r/\rho_{L}$ for some $r>0$. For BA scale-free
networks we know that $\rho_{L}\sim\sqrt{n}$ (see Lemma \ref{lemma_BA}).
Then, if we re-scale the coupling parameter as $\kappa\to\kappa_{s}/\sqrt{n}$
the new synchronization condition is $0<\kappa_{s}/\sqrt{n}<r/\rho_{L}\approx r/\sqrt{n}$,
which leads to $0<\kappa_{s}<r$. 
\end{proof} 

\begin{corollary} 
\label{cor_ER}
Consider an ER random network with $n$ nodes and the network model
\eqref{eq:net_mod_lap} with new coupling parameter scaled as 
\[
\kappa=\frac{\kappa_{s}}{\ln n}.
\]
Then for large enough network size $n$ and long enough time delay
$\tau$, there is an interval $I=(0,\kappa_{\max}(n))\subset\mathbb{R}$
such that any equilibrium solution is locally
exponentially stable if $\kappa_{s}\in I$ and unstable if $\kappa_{s}\in\mathbb{R}\setminus\overline{I}$
(with probability 1). Moreover, the length of the interval $I$ converges
to a nonzero constant with $n\to\infty$ with probability 1: 
\[
\lim_{n\to\infty}\kappa_{\max}(n)=\kappa_{\infty},\quad0<\kappa_{\infty}<\infty.
\]
\end{corollary}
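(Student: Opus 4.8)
The plan is to follow the same strategy as the proof of Corollary~\ref{cor_BA_rescaling}, only replacing the $\sqrt{n}$ growth of the Laplacian spectral radius by its logarithmic growth in the ER regime. First I would recall from Theorem~\ref{theo:fp} that, for $\tau$ large, any synchronous equilibrium of \eqref{eq:net_mod_lap} is locally exponentially stable precisely when $0<\kappa<r_0/\rho_L$ and unstable when $\kappa>r_0/\rho_L$, where the constant $r_0=r_0(f,h)$ is determined solely by the isolated dynamics and the coupling function and is therefore \emph{independent of the network size} $n$. The whole effect of the topology thus enters only through $\rho_L$.

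Next I would control $\rho_L$ for the connected ER graph with $p=p_0\ln n/n$, $p_0>1$. As established in the proof of Corollary~\ref{theorem:ER} (via Lemma~\ref{lemma:Bollobas_ER}), the maximal degree satisfies $g_{\max}\sim p_0\ln n$ with probability $1$ as $n\to\infty$. Feeding this into the two-sided estimate of Lemma~\ref{lemma:rhoL_and_gmax}, namely $\tfrac{n}{n-1}g_{\max}\le \rho_L\le 2g_{\max}$, yields that $\rho_L=\Theta(\ln n)$ almost surely; more precisely, $\rho_L/\ln n$ is asymptotically confined to the interval $[p_0,2p_0]$. Substituting the rescaling $\kappa=\kappa_s/\ln n$ into the stability window gives the equivalent condition $0<\kappa_s<r_0\ln n/\rho_L=:\kappa_{\max}(n)$, with instability for $\kappa_s>\kappa_{\max}(n)$; this establishes the interval $I=(0,\kappa_{\max}(n))$ with the claimed dichotomy (with probability $1$). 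Since $\rho_L=\Theta(\ln n)$, the quotient $\kappa_{\max}(n)=r_0\ln n/\rho_L$ stays bounded away from both $0$ and $\infty$, lying asymptotically in $[\,r_0/(2p_0),\,r_0/p_0\,]$, which already yields a non-degenerate synchronization window for all large $n$.

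The main obstacle is the assertion that $\kappa_{\max}(n)$ converges to a \emph{definite} constant $\kappa_\infty$, rather than merely oscillating within the band above: this requires the sharper statement that $\rho_L/\ln n$ itself converges almost surely. The crude bounds of Lemma~\ref{lemma:rhoL_and_gmax} are insufficient for this, and I would instead appeal to the near-regularity of $G(n,p)$ in this regime---the degrees concentrate around $np=p_0\ln n$, while the extremal adjacency eigenvalue contributes only an $O(\sqrt{\ln n})$ correction to $\rho_L$---to conclude $\rho_L/\ln n\to p_0$ and hence $\kappa_\infty=r_0/p_0\in(0,\infty)$. If one is content with the weaker conclusion that the window length is bounded away from $0$ and $\infty$, the two-sided estimate already suffices and the delicate concentration step can be bypassed.
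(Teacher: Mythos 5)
Your first two paragraphs reproduce, in expanded form, exactly the paper's own proof: the paper disposes of Corollary~\ref{cor_ER} in one line, saying it ``follows the same steps'' as Corollary~\ref{cor_BA_rescaling} with Lemma~\ref{lemma_BA} replaced by Lemma~\ref{lemma:Bollobas_ER}, which unpacked is precisely your chain: the window $0<\kappa<r_0/\rho_L$ from Theorem~\ref{theo:fp} with $r_0$ independent of $n$, the estimate $g_{\max}\sim p_0\ln n$ from Lemma~\ref{lemma:Bollobas_ER}, the sandwich $\tfrac{n}{n-1}g_{\max}\le\rho_L\le 2g_{\max}$ from Lemma~\ref{lemma:rhoL_and_gmax}, and the substitution $\kappa=\kappa_s/\ln n$. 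Your third paragraph then puts your finger on a genuine gap, and it is one the paper shares: these bounds only confine $\kappa_{\max}(n)=r_0\ln n/\rho_L$ to a band asymptotically inside $[r_0/(2p_0),\,r_0/p_0]$, and neither the proof of Corollary~\ref{cor_BA_rescaling} nor its one-line adaptation here actually proves that $\kappa_{\max}(n)$ converges to a single constant $\kappa_\infty$, which is part of the stated conclusion. You are right that this requires convergence of $\rho_L/\ln n$ itself, which the quoted lemmas cannot deliver.

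However, your proposed repair of that step rests on a false premise. In the regime $p=p_0\ln n/n$ with $p_0>1$ fixed, the degrees of $G(n,p)$ do \emph{not} concentrate around $np$: they are Poisson-like with mean $p_0\ln n$, and by large deviations the maximum degree satisfies $g_{\max}/\ln n\to\gamma(p_0)$, where $\gamma(p_0)>p_0$ is the root of $\gamma\ln(\gamma/p_0)-\gamma+p_0=1$ (for instance $\gamma\to e$ as $p_0\to 1^+$). Since $\rho_L\ge\tfrac{n}{n-1}g_{\max}$, this forces $\liminf_{n}\rho_L/\ln n\ge\gamma(p_0)>p_0$, so your claimed limit $\rho_L/\ln n\to p_0$, and with it $\kappa_\infty=r_0/p_0$, cannot hold. (The same Poisson fluctuation shows that Lemma~\ref{lemma:Bollobas_ER} is a dense-regime bound being stretched beyond its range of validity precisely at the near-threshold scaling considered here --- a defect of the paper's own proofs of Corollaries~\ref{theorem:ER} and~\ref{cor_ER}, not only of your argument.) The sound part of your repair is the perturbation step: by Weyl's inequality $\rho_L\le g_{\max}+\vert\lambda_{\min}(A)\vert$, and for such $p$ one has $\vert\lambda_{\min}(A)\vert=O(\sqrt{\ln n})$ with high probability, so $\rho_L=g_{\max}+O(\sqrt{\ln n})$ and the convergence of $\kappa_{\max}(n)$ does reduce to convergence of $g_{\max}/\ln n$; combined with the correct maximum-degree asymptotics this would give $\kappa_\infty=r_0/\gamma(p_0)$, a smaller constant than the one you propose.
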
 The proof of the Corollary \ref{cor_ER} follows the same
steps of the Corollary \ref{cor_BA_rescaling} using the Lemma \ref{lemma:Bollobas_ER}.

\section*{Acknowledgments}

The work of D.N.M Maia was supported by CNPq grant
233718/2014-1. E. Macau also acknowledges the support of CNPq and FAPESP2015/50122-0 T. Pereira acknowledges the support of CEPID-CeMeai FAPESP
project 2013/07375-0 and S.Yanchuk acknowledges the support of the German Research
Foundation (DFG) in the framework of the Collaborative Research Center
910. This research is also supported by grant 2015/50122-0   of Sao Paulo Research
Foundation (FAPESP) and DFG-IRTG 1740/2. The authors acknowledge valuable discussions with Jan Philipp Pade and
Stefan Ruschel.

\bibliographystyle{plain}
\bibliography{dp}

\medskip{}
 Received xxxx 20xx; revised xxxx 20xx. \medskip{}

\end{document}